\newcommand{\cvec}{\ensuremath{\mathbf{1}}}
\newcommand{\cost}{\ensuremath{\mathrm{cost}}}
\newcommand{\opt}{\ensuremath{\mathrm{opt}}}
\newcommand{\sol}{\cost({\vec x})}
\newcommand{\energy}{\ensuremath{\mathcal{E}}}
\newcommand{\kld}{\ensuremath{\mathrm{KL}}}
\newcommand{\diag}[1]{\mathbf{Diag}(#1)}
\begin{document}
\title{On the Convergence Time of a Natural Dynamics\\ for Linear Programming}
\author{Vincenzo Bonifaci$^*$}%
\thanks{$^*$ Vincenzo Bonifaci, Institute for the Analysis of Systems and Informatics, National Research Council of Italy (IASI-CNR), Rome, Italy} 
%
%
\begin{abstract}
We consider a system of nonlinear ordinary differential equations for the solution of linear programming (LP) problems that was first proposed in the mathematical biology literature as a model for the foraging behavior of acellular slime mold \emph{Physarum polycephalum}, and more recently considered as a method to solve LPs. We study the convergence time of the continuous Physarum dynamics in the context of the linear programming problem, and derive a new time bound to approximate optimality that depends on the relative entropy between projected versions of the optimal point and of the initial point. The bound scales logarithmically with the LP cost coefficients and linearly with the inverse of the relative accuracy, establishing the efficiency of the dynamics for arbitrary LP instances with positive costs. 
\end{abstract}
\maketitle

\tableofcontents
\vfill
\textbf{Keywords}: linear programming, natural algorithm, Physarum dynamics, relative entropy, Mirror Descent
\newpage

\section{Introduction}
\label{sec:intro}
The theoretical analysis of natural systems has historically been the domain of mathematical biology, dynamical systems theory, and physics, but certain natural processes are capable of exhibiting remarkable information processing abilities which are often best understood from an optimization perspective. Indeed, the application of a ``computational lens'' to such processes has been advocated in different disciplines, and 
efforts are underway to identify, classify and analyze these so-called \emph{natural algorithms} \cite{Navlakha:2011,Chazelle:2012}. 

One such example can be found in the slime mold \emph{Physarum polycephalum}. \emph{P.~polycephalum} is an acellular, amoeboid slime mold in the \emph{Mycetozoa} group. In controlled experiments, the slime mold's capabilities have been leveraged to determine the shortest path between two locations in a network \cite{Nakagaki:2000,Tero:2006} and, more generally, to adaptively form efficient transport networks \cite{Tero:2010}. 
In fact, a dynamical model proposed by the mathematical biologists to describe the time evolution of \emph{P.~polycephalum}'s network physiology \cite{Tero:2007} has been rigorously proved to be algorithmically efficient for problems such as the single-source single-sink shortest path problem \cite{Bonifaci:2012,Becchetti:2013} and the minimum-cost transshipment problem \cite{Ito:2011,Straszak:2016:soda}. 

More recently, a variant of what we will call for short the \emph{Physarum dynamics} has been proposed for solving linear programming (LP) problems \cite{Johannson:2012}. 
Such dynamics is a direct mathematical extension of the one that has been studied for the shortest path and transshipment problems. 
It was shown that, under very mild assumptions on the linear program, the dynamics converges to an optimal LP solution \cite{Straszak:2016:itcs}. However, the bound for the time of convergence of a discretization of the dynamics to an approximate solution has only been proved to be polynomial when the LP cost coefficients are polynomially bounded and the constraint matrix has bounded maximum subdeterminant. 

The contribution of this paper is to study the convergence time of the continuous Physarum dynamics in the context of the linear programming problem, and to derive a new time bound to approximate optimality that does not depend on the maximum subdeterminant of the constraint matrix, and depends only logarithmically on the LP costs, establishing efficiency for any LP instance with positive costs. The proof is based on convex duality and on a dimensionless potential function that involves the relative entropy between the optimal and the current LP solution. 
We leave the study of a discretized version of the dynamics for future work, but it is natural to conjecture that some appropriate discretization should behave similarly to the continuous time dynamics. 
Indeed, several convex optimization methods can be interpreted as the discretization of an ordinary differential equation system, the solutions of which are guaranteed to converge to the set of minimizers; a well-known example is the interior point method \cite{Karmarkar:1990,Bayer:1989}. 

There is another compelling reason to study the convergence properties of the Physarum dynamics. It has been showed that, at least when started from a feasible point, this dynamics can be interpreted as a \emph{natural gradient descent} algorithm in a space endowed with a non-Euclidean metric obtained from an entropy-like function \cite{Straszak:2016:itcs}. This is also the case for certain incarnations of well-known meta-algorithms, such as Mirror Descent \cite{Nemirovski:1983,Beck:2003}, which are at the basis of very effective approximation algorithms for machine learning and convex optimization problems \cite{Arora:2012}. 
We build on the intuition from Straszak and Vishnoi \cite{Straszak:2016:itcs} by showing that, when the feasible LP region is the unit simplex, but independently of whether the initial point is feasible or not, the Physarum dynamics is identical to the continuous Mirror Descent dynamics of Nemirovski and Yudin \cite{Nemirovski:1983} in the metric generated by the negative entropy function. Thus, when the feasible LP region is the simplex, the dynamics can be interpreted as a Mirror Descent method in \emph{information geometry} \cite{Amari:2016}. However, a similar connection may not hold for a more general feasible region. A full characterization of the meta-algorithm behind the dynamics remains open; we believe it deserves to be investigated, due to its potential to suggest a novel iterative approach to linear optimization problems. 



\subsection{Linear programming and the Physarum dynamics}
\label{sec:dynamics-def}
Let $N$ and $E$ be two finite index sets. 
Given a real matrix $\vec A \in \Real^{N \times E}$, a positive vector $\vec c \in \Real^E_{>0}$, and a vector $\vec b \in \Real^N$, we consider the linear programming problem
\begin{align}
\label{eq:LP}
\min\ & \cost(\vec x) \\
\notag \text{s.t. } & \vec A \vec x = \vec b \\
\notag & \vec x \ge \vec 0, \vec x \in \Real^E 
\end{align}
where $\cost(\vec x) \defas \vec c\tp \vec x$. 
We assume that $\vec A$ has full rank and that a nonzero optimal solution to \eqref{eq:LP} exists; uniqueness is not required. We denote by $\vec x^*$ an arbitrary optimal solution to $\eqref{eq:LP}$, and denote by $\opt \defas \cost(\vec x^*)$ its value.  

We describe the \emph{(directed) Physarum dynamics} \cite{Tero:2007,Ito:2011,Bonifaci:2012,Bonifaci:2013,Becchetti:2013,Straszak:2016:soda,Straszak:2016:itcs} that solves \eqref{eq:LP}. 
Let $\vec x \in \Real^E_{> 0}$ be a positive vector, and let $\vec C$ be the diagonal matrix with entries $x_j/c_j$, for $j \in E$. Let $\vec L \defas \vec A \vec C \vec A\tp$; the matrix $\vec L$ is nonsingular and positive definite. Let $\vec p \in \Real^N$ be the unique solution to $\vec L \vec p = \vec b$, and let $\vec q \defas \vec C \vec A\tp \vec p$.  
The \emph{Physarum dynamics} for the linear program \eqref{eq:LP} is 
\begin{equation}
\label{eq:adap-cont}
\dot{x}_j(t) = q_j(t) - x_j(t)
\qquad \text{ for all } j \in E
\end{equation}
over the domain $\Omega \defas \Real^E_{>0}$, where we used the notation $\dot{x}_j(t) \defas (d/dt) x_j(t)$. 
In vector notation, and omitting the implicit dependency on time, the Physarum dynamics can be written as
\begin{equation}
\label{eq:ode-cont}
\dot{\vec x} = \vec C \vec A\tp \vec L^{-1} \vec b - \vec x. 
\end{equation}
The dynamical system has an initial condition of the form $\vec x(0)=\vec s$ for some $\vec s \in \Real^E_{>0}$. 
Existence of a solution $\vec x(t)$ to \eqref{eq:ode-cont} for $t \in [0,\infty)$ has been proved by Straszak and Vishnoi \cite[Theorem 1.1]{Straszak:2016:itcs}. The system \eqref{eq:ode-cont} is well-defined irrespective of whether the starting vector $\vec x(0)$ satisfies $\vec A \vec x(0) = \vec b$ or not; the case where it does is referred to as the \emph{feasible start} case. In the special case where $\vec A$ is derived from the signed incidence matrix of a graph, problem \eqref{eq:LP} is a \emph{minimum-cost transshipment problem} and several of the quantities defined above have an intuitive interpretation; we refer to Section \ref{sec:network} for details. 

\subsection{Our contribution}
\label{sec:our}
From previous results, it is known that the solution to \eqref{eq:ode-cont} exists, and that it converges to a feasible and optimal solution of the linear program \eqref{eq:LP}. The known bound on the convergence time, however, depends on the largest absolute value of a subdeterminant of the constraint matrix $\vec A$. 
Our main contribution is to show that, in the case of feasible start, this dependence is unnecessary, and that one can obtain a bound that only depends logarithmically on the ratio between the starting cost and the optimal cost, and on the relative entropy of the optimal solution with respect to the starting solution. More precisely, we prove the following theorem. 

\begin{theorem}
\label{thm:main-result}
For a feasible initial condition $\vec s \in \Real^E_{>0}$, consider the solution $\vec x:[0,\infty) \to \Omega$ to the Physarum dynamics \eqref{eq:ode-cont} with $\vec x(0)=\vec s$. Then $\vec x(t)$ is a feasible solution to \eqref{eq:LP} for any $t\ge 0$, and for any $\eps>0$, it holds that $\cost(\vec x(t)) \le (1+\eps) \opt$ for all 
$$ t \ge \frac{6}{\eps} \left( \ln \frac{\cost(\vec x(0))}{ \opt} + \kld(\vec \xi^*, \vec \xi(0)) \right), $$
where $\kld(\cdot,\cdot)$ denotes the relative entropy (Kullback-Leibler divergence) between distributions, and $\xi_j(0) \defas c_j x_j(0) / \cost(\vec x(0))$, $\xi_j^* \defas c_j x_j^* / \opt$ for $j \in E$. 
In particular, $\cost(\vec x(t)) \le (1+\eps) \opt$ for all 
$$ t \ge \frac{6}{\eps} \left( 2 \ln \frac{\cost(\vec x(0))}{ \opt} + \ln \mu \right), $$
where $\mu \defas \max_{j \in E} x_j^* / x_j(0)$.  
\end{theorem}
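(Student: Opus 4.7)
The plan is to construct a non-negative Lyapunov potential that combines a logarithmic cost gap and a Kullback--Leibler divergence term, and to show it decreases at a uniform rate until the desired LP accuracy is reached. Feasibility of the starting point is preserved for free: $\vec A \vec q = \vec A \vec C \vec A\tp \vec L^{-1} \vec b = \vec b$, so $\vec A \dot{\vec x} = \vec b - \vec A \vec x$, which vanishes whenever $\vec A \vec x = \vec b$; strict positivity of $\vec x(t)$ is inherited from the existence result of Straszak and Vishnoi recalled in the introduction.

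Set $\vec p \defas \vec L^{-1} \vec b$ and $\vec s \defas \vec A\tp \vec p$, so that $q_j/x_j = s_j/c_j$. Two duality-flavored identities for $\vec q$ follow immediately from feasibility of both $\vec x$ and $\vec x^*$:
\[
\cost(\vec q) = \vec c\tp \vec q = \vec x\tp \vec s = \vec b\tp \vec p = \vec p\tp \vec L \vec p \geq 0, \qquad \cost(\vec q) = \langle \vec s, \vec x^* \rangle.
\]
Moreover, $\vec q$ minimizes the quadratic energy $\tfrac12 \vec y\tp \vec C^{-1} \vec y$ over $\{\vec y : \vec A \vec y = \vec b\}$, so $\sum_j c_j q_j^2/x_j \leq \sum_j c_j x_j = \cost(\vec x)$; applying Cauchy--Schwarz to $\cost(\vec q) = \sum_j \sqrt{c_j x_j} \cdot \sqrt{c_j/x_j}\, q_j$ then yields $\cost(\vec q) \leq \cost(\vec x)$. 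Combined with $(d/dt)\cost(\vec x) = \cost(\vec q) - \cost(\vec x)$, this shows $\cost(\vec x(t))$ is monotone non-increasing, so it suffices to bound the first time it crosses $(1+\eps)\opt$.

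The heart of the proof is the analysis of the dimensionless potential
\[
V(t) \defas \ln\frac{\cost(\vec x(t))}{\opt} + \kld(\vec\xi^*, \vec\xi(t)),
\]
both summands of which are non-negative. With $\beta \defas \cost(\vec x)/\opt$ and $\gamma \defas \cost(\vec q)/\opt$, direct differentiation gives $(d/dt) \ln \cost(\vec x) = \gamma/\beta - 1$, and writing $(d/dt)\xi_j / \xi_j = \dot x_j/x_j - (d/dt)\cost(\vec x)/\cost(\vec x)$ together with the key identity $\sum_j \xi_j^* q_j/x_j = \langle \vec s, \vec x^* \rangle/\opt = \gamma$ gives $(d/dt)\kld = \gamma(1/\beta - 1)$. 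Summing,
\[
\dot V = \frac{2\gamma}{\beta} - 1 - \gamma.
\]
Since $0 \leq \gamma \leq \beta$, a one-variable optimization over $\gamma$ shows $\dot V \leq -\min(1, \beta - 1)$, so whenever $\cost(\vec x(t)) \geq (1+\eps)\opt$ one has $\dot V \leq -\eps/6$ (the constant $6$ absorbs the regime $\eps \geq 1$ uniformly). Integrating against $V \geq 0$ and invoking monotonicity of $\cost(\vec x)$ then gives the first stated time bound. For the $\mu$-form, one expands
\[
\kld(\vec\xi^*, \vec\xi(0)) = \ln\frac{\cost(\vec x(0))}{\opt} + \sum_j \xi_j^* \ln\frac{x_j^*}{x_j(0)} \leq \ln\frac{\cost(\vec x(0))}{\opt} + \ln\mu,
\]
using $\sum_j \xi_j^* = 1$ and $x_j^*/x_j(0) \leq \mu$.

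The main obstacle is arriving at the clean expression $\dot V = 2\gamma/\beta - 1 - \gamma$; downstream everything is a bounded one-variable exercise. This reduction crucially exploits that the dual vector $\vec p$ pairs the same way with $\vec x$ and with $\vec x^*$ via $\langle \vec A\tp \vec p, \vec y\rangle = \vec b\tp \vec p$ for any $\vec y$ satisfying $\vec A \vec y = \vec b$, which is precisely where feasibility of the starting point enters.
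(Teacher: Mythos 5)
Your proof is correct and reaches the same conclusion, but by a slightly sharper route than the paper.  The paper's Lemma~\ref{lem:cost-vs-opt} is stated for arbitrary $\vec x \in \Real^E_{>0}$, so it must bound $\vec c\tp \vec q = \sum_j r_j q_j x_j$ via Cauchy--Schwarz, yielding the inequality $(d/dt)\ln(\cost(\vec x)/\opt) \le (\energy/\cost(\vec x))^{1/2} - 1$; Lemma~\ref{lem:improvement} then has to wrestle with the square root via the substitution $\delta = 1/(1+\eps)$ and a two-case argument.  You instead observe that, for \emph{feasible} $\vec x$, one has $\vec c\tp \vec q = \vec x\tp \vec A\tp \vec p = \vec b\tp \vec p = \vec p\tp \vec L \vec p = \energy$ exactly (an identity that follows by combining the paper's Propositions~\ref{prop:pLp} and~\ref{lem:energy} but is never isolated there), so $\dot V = 2\gamma/\beta - 1 - \gamma$ \emph{exactly}, which is linear in $\gamma$.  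The one-variable optimization over $\gamma \in [0,\beta]$ is then immediate and gives the cleaner bound $\dot V \le -\min(1,\beta-1)$, which is actually tighter than the paper's $-\eps/2$ under $\beta\ge(1+\eps)^2$.  The trade-off is that the paper's lemmas are stated so as to hold off the feasible manifold, while yours are intrinsically tied to the invariant set $\vec A \vec x = \vec b$ --- perfectly adequate here since the theorem is about feasible start.  Your identity $\sum_j \xi_j^* q_j/x_j = \gamma$ is the same Tellegen-style pairing the paper uses in Lemma~\ref{lem:x-log-x} via Proposition~\ref{lem:energy}.  Two minor remarks: you reuse $\vec s$ for both the initial condition and $\vec A\tp\vec p$, which clashes with the theorem statement's notation; and the parenthetical ``the constant 6 absorbs the regime $\eps\ge 1$ uniformly'' in fact only covers $\eps \le 6$ (for $\eps>6$ you have $\dot V\le -1 < -\eps/6$ fails), but the paper's own Lemma~\ref{lem:improvement} is restricted to $\eps\in(0,1/2)$ and its Theorem~\ref{thm:main} silently needs $\eps\le 3/2$, so this edge case is shared and immaterial.
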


We remark that our result applies to the continuous formulation of the dynamics, and not necessarily to its Euler discretization that has been considered, together with the continuous one, in previous papers. While we conjecture that \emph{some} discretization may be similarly efficient as the bound in Theorem \ref{thm:main-result} suggests, it may also be the case that a simple Euler discretization is insufficient to obtain such a result and that a more accurate discretization technique, such as a Runge-Kutta method, would help in this sense. 

The appearance of the relative entropy term in our potential function is not an accident: by building on the geometric interpretation given by Straszak and Vishnoi \cite{Straszak:2016:itcs}, we show that when the feasible LP region is the unit simplex, independently of whether the dynamics is initialized with a feasible point or not, its trajectories coincide with those of the continuous Mirror Descent method of Nemirovski and Yudin \cite{Nemirovski:1983} in a metric with geometry dictated by the negative entropy function -- also known as the information geometry metric \cite{Amari:2016}. 

\subsection{Related work}
\label{sec:related}
An undirected variant of the Physarum dynamics has been first proposed in the mathematical biology literature by Tero, Kobayashi and Nakagaki \cite{Tero:2007} as a model for the foraging physiology of the true slime mold \emph{Physarum polycephalum}, an acellular organism that has been proved capable of solving shortest path problems effectively in laboratory experiments \cite{Nakagaki:2000}. The convergence to optimality of the continuous dynamics for the shortest path problem and for its close generalization -- the minimum-cost transshipment problem -- has been studied analytically by Bonifaci, Mehlhorn and Varma \cite{Bonifaci:2012} and by Ito et al.~\cite{Ito:2011}. An analysis of the convergence time of the Euler discretization of the dynamics was carried out by Becchetti et al.~\cite{Becchetti:2013} for the shortest path problem, and by Straszak and Vishnoi \cite{Straszak:2016:soda} for the minimum-cost transshipment problem. In summary, these works proved that the Physarum dynamics yields a polynomial-time approximation scheme to the shortest path problem and to the transshipment problem, assuming that the costs associated to the edges of the network are polynomially bounded. Observe that, in the statement of Theorem \ref{thm:main-result}, the costs are confined within logarithms, and thus a discrete version of the dynamics that achieved a similar convergence time as in Theorem \ref{thm:main-result} would \emph{not} require the costs to be polynomially bounded to be efficient.  

The generalization of the Physarum dynamics to linear programming problems that we consider here has been first suggested by Johannson and Zou \cite{Johannson:2012}. Most relevant to the current paper is the work of Straszak and Vishnoi \cite{Straszak:2016:itcs}, who initiated the rigorous study of the Physarum dynamics for LP problems of the form \eqref{eq:LP}. Straszak and Vishnoi proved that a solution to the dynamics exists over the entire time horizon $[0,\infty)$, and that a bound on the convergence time of the continuous dynamics can be expressed in terms of the parameter $\mathcal{D}$, the \emph{largest absolute value of a subdeterminant} of the constraint matrix $\vec A$, as summarized by their theorem that we quote here for comparison. 

\begin{theorem}\cite[Theorem 6.3]{Straszak:2016:itcs}
\label{thm:sv}
Suppose that $\vec x: [0,\infty) \to \Omega$ is any solution to the Physarum dynamics. Then, for some $R, \nu > 0$ depending only on $\vec A$, $\vec b$, $\vec c$, $\vec x(0)$, we have
$$ \abs{\cost(\vec x(t)) - \opt} \le R \cdot \exp(-\nu t), $$
where one can take $\nu = \mathcal{D}^{-3}$ and $R = \exp(8 \mathcal{D}^2 \cdot \norm[1]{\vec c} \cdot \norm[1]{\vec b}) \cdot (\card{E} + M_x)^2$. Here, 
$$\mathcal{D} \defas \max \{ \abs{\det(\vec A')} \,:\, \vec A' \text{ a square submatrix of } \vec A \}, $$
and 
$$ M_x \defas \max\left( \max_{j\in E} x_j(0), \max_{j\in E} x_j^{-1}(0) \right). $$
\end{theorem}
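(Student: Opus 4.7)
The plan is a two-phase analysis: first a qualitative proof that the trajectory converges to some feasible optimal point $\vec x^\infty$, and then a quantitative rate obtained by linearizing around $\vec x^\infty$. Differentiating the objective along a trajectory,
\[
\tfrac{d}{dt}\cost(\vec x) \;=\; \vec c\tp \dot{\vec x} \;=\; \vec c\tp \vec q - \cost(\vec x) \;=\; \vec p\tp \vec A \vec x - \cost(\vec x),
\]
so in the feasible regime the drift is $\vec p\tp \vec b$, which plays the role of a dual objective. This identity is the starting point for a Lyapunov-style argument and explains why the cost is the natural quantity to track; note, however, that the theorem allows an \emph{arbitrary} (possibly infeasible) initial point, so the argument must allow for $\vec A\vec x(t) - \vec b$ to relax to zero along the flow.

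For the qualitative step, I would construct a Lyapunov function combining the cost $\vec c\tp\vec x$, an infeasibility penalty such as $\tfrac12(\vec A\vec x - \vec b)\tp \vec L^{-1}(\vec A\vec x - \vec b)$, and a weighted entropy $\sum_j c_j x_j(\log(x_j/x_j^*) - 1)$ measured against some fixed optimal vertex $\vec x^*$. Showing this functional is non-increasing under \eqref{eq:ode-cont}, and strictly decreasing unless $\vec x$ is KKT-stationary, gives $\vec x(t) \to \vec x^\infty$ with $\vec A \vec x^\infty = \vec b$ and $\cost(\vec x^\infty) = \opt$. This part uses only positive definiteness of $\vec L$ and the variational characterization of $\vec q$ as the minimum-energy flow routing $\vec b$ through resistors with weights $c_j/x_j$.

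The exponential rate would come from linearizing $\vec F(\vec x) \defas \vec C \vec A\tp \vec L^{-1} \vec b - \vec x$ at $\vec x^\infty$. On the support of $\vec x^\infty$ (an optimal basic face), the Jacobian's eigenvalues are $-1$ plus corrections coming from $\vec A \vec C^\infty \vec A\tp$; bounding the smallest positive eigenvalue of the relevant Schur complement from below is where $\mathcal{D}$ enters. By Cramer's rule, every vertex coordinate is at least $1/\mathcal{D}$ in absolute value and at most $\mathcal{D}\,\norm[1]{\vec b}$; feeding these bounds into $\vec C$, and then into $\vec L^{-1}$ (which contributes another $\mathcal{D}$), produces the gap $\nu \ge \mathcal{D}^{-3}$. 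A Gr\"onwall argument then transfers this local spectral gap into the global bound $\abs{\cost(\vec x(t)) - \opt} \le R \cdot e^{-\nu t}$. The prefactor $R$ absorbs both the initial gap, which is at most $\norm[1]{\vec c}\cdot(\card{E} + M_x)$ by definition of $M_x$, and a transient-amplification factor $\exp(8\mathcal{D}^2 \norm[1]{\vec c}\norm[1]{\vec b})$ that controls how much the Lyapunov function can grow before the linearization becomes accurate; this factor is produced by inserting the coordinate bounds $1/\mathcal{D}$ and $\mathcal{D}\norm[1]{\vec b}$ into the maximum slope of the cost over the feasible polytope.

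The main obstacle is precisely the quantitative spectral-gap bound $\nu \ge \mathcal{D}^{-3}$: establishing it requires delicate combinatorial control of $\vec A \vec C^\infty \vec A\tp$ restricted to the optimal basis, together with a perturbation argument certifying that the linearization remains valid in a neighborhood of $\vec x^\infty$ whose diameter is itself controlled by $\mathcal{D}$. A secondary difficulty is stitching the transient phase and the asymptotic phase together so that the product of the two bounds yields the clean exponential form with the stated $R$ and $\nu$; once the correct Lyapunov function has been chosen, the remaining steps---convergence, identification of the limit, and the transient amplification estimate---are comparatively routine.
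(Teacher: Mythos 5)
First, a point of context: Theorem \ref{thm:sv} is not proved in this paper at all --- it is quoted verbatim from Straszak and Vishnoi \cite{Straszak:2016:itcs} purely for comparison with Theorem \ref{thm:main-result} --- so the only meaningful benchmark is whether your outline would actually deliver the stated bound. It would not, as written: it is a plan whose two load-bearing steps are exactly the ones left open. (i) The spectral-gap claim $\nu \ge \mathcal{D}^{-3}$ via linearization of $\vec F(\vec x) = \vec C \vec A\tp \vec L^{-1}\vec b - \vec x$ at the limit $\vec x^\infty$ is asserted, not established, and it is genuinely delicate: $\vec x^\infty$ typically lies on the \emph{boundary} of $\Omega=\Real^E_{>0}$ (coordinates off the optimal support vanish), where $\vec C$ degenerates, $\vec L$ may lose definiteness if $\vec A$ restricted to the support is rank-deficient, and the vector field need not be smooth, so a Jacobian/stable-manifold argument does not apply off the shelf. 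Moreover the paper explicitly does not assume uniqueness of $\vec x^*$, so the limit need not be a vertex, and the Cramer's-rule bounds ($x_j^\infty \ge 1/\mathcal{D}$ on the support, $x_j^\infty \le \mathcal{D}\,\norm[1]{\vec b}$) that you feed into the Schur complement are not justified as stated. (ii) Even granting a local rate near $\vec x^\infty$, the theorem demands a bound valid for \emph{all} $t\ge 0$ with the explicit prefactor $R=\exp(8\mathcal{D}^2\norm[1]{\vec c}\norm[1]{\vec b})\cdot(\card{E}+M_x)^2$; stitching the transient and asymptotic phases requires bounding the time to enter the linearization neighborhood in terms of $\mathcal{D}$, $\norm[1]{\vec b}$, $\norm[1]{\vec c}$, $M_x$, which is precisely the quantitative content of the result and is nowhere supplied --- you yourself flag it as ``the main obstacle.'' A further unaddressed issue: with infeasible starts $\cost(\vec x(t))-\opt$ can be negative, so the absolute-value bound needs a two-sided estimate (a lower bound on the cost in terms of the decaying infeasibility error), which the Lyapunov sketch does not provide.

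For comparison, the cited proof does not proceed by linearization or eigenvalue analysis of the Jacobian; it runs through explicit potential/Lyapunov functions (cost, energy, and entropy-like terms weighted by an optimal solution, in the spirit of Section \ref{sec:cost} here) combined with $\mathcal{D}$-based bounds on vertex coordinates and dual slacks, and the explicit constants $\nu=\mathcal{D}^{-3}$ and $R$ come out of those quantitative estimates. Your first-phase identity $\frac{d}{dt}\cost(\vec x)=\vec p\tp\vec A\vec x-\cost(\vec x)$ and the qualitative convergence argument are reasonable and consistent with that strategy, but the quantitative heart of the theorem is missing from the proposal.
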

Compared to this result of Straszak and Vishnoi, our contribution is to derive a bound that avoids the dependency on $\mathcal{D}$, thus showing that the dynamics are efficient  --to the extent made precise in the statement of Theorem \ref{thm:main-result}-- for all linear programs of the form \eqref{eq:LP}, not just for those with special constraint matrices. 
Note that, in general, the bounds of Theorem \ref{thm:main-result} and \ref{thm:sv} are incomparable: for a fixed relative error $\eps$, the time for convergence guaranteed by Theorem \ref{thm:main-result} scales polynomially in the input encoding length, while Theorem \ref{thm:sv} only yields an exponential dependence; on the other hand, for a fixed input, Theorem \ref{thm:sv} achieves a polynomial dependence on $\log(1/\eps)$ (by taking $t >\!\!> \mathcal{D}^3$), while this is $O(1/\eps)$ in Theorem \ref{thm:main-result}. It is known that a simultaneous polynomial dependence on $\log(1/\eps)$ and on the input length cannot be achieved \cite[Appendix B]{Straszak:2016:soda}. 

As mentioned in the introduction, several convex optimization methods can be interpreted as discretizations of ordinary differential equation systems: for example, the Interior Point method \cite{Bayer:1989,Karmarkar:1990} and the Mirror Descent method \cite[Chapter 3]{Nemirovski:1983}. 
Straszak and Vishnoi \cite{Straszak:2016:itcs} proved that the Physarum dynamics with feasible start can be interpreted as natural gradient descent in an appropriate information metric. 
Amari \cite{Amari:2016} gives an overview of natural gradient methods in the context of information geometry; see also Raskutti and Mukherjee \cite{Raskutti:2015}.

\subsection{Organization of the paper}
The remainder of the paper is organized as follows. In Section \ref{sec:basic} we prove some basic facts about the Physarum dynamics, including an alternative characterization of the vector $\vec q \in \Real^E$ defined in Section \ref{sec:dynamics-def}. In Section \ref{sec:feasibility} we discuss the time of convergence to the feasible region of the LP and prove that the set of feasible LP solutions is an invariant set for the dynamics. In Section \ref{sec:cost} we consider the time evolution of the cost of a feasible solution and prove our main result, Theorem \ref{thm:main-result}. Finally, in Section \ref{sec:MD} we show that when the feasible LP region is the unit simplex, the Physarum dynamics can be interpreted as the continuous Mirror Descent method in a metric derived from a negative entropy function. Due to space constraints, many proofs are deferred to the Appendix. 

\section{Basic properties of the dynamics}
\label{sec:basic}
\subsection{Notation} In the paper we reserve boldface symbols for vectors or matrices and non-boldface symbols for scalars or sets. 
We use the standard norms: for example, for $\vec v \in \Real^n$: $\norm[1]{\vec v} \defas \sum_{i=1}^n \abs{v_i}$, $\norm[2]{\vec v} \defas (\sum_{i=1}^n v_i^2)^{1/2}$. With the notation $\diag{(d_i)_{i=1}^n}$ we mean the $n \times n$ diagonal matrix with $d_i$ as the $i$th term on the main diagonal. 

For the whole paper, the linear program \eqref{eq:LP} is fixed, in other words the triple $(\vec A, \vec b, \vec c)$ is fixed. Whenever the matrices or vectors $\vec C = \diag{(x_j/c_j)_{j\in E}}$, $\vec L=\vec A \vec C \vec A\tp$, $\vec p = \vec L^{-1} \vec b$, or $\vec q = \vec C \vec A\tp \vec p$ appear, they should be understood as computed with respect to a point $\vec x \in \Real^E_{>0}$. As $\vec x = \vec x(t)$ evolves in time with the dynamics \eqref{eq:ode-cont}, the former quantities are time-varying as well. The quantity $\energy \defas \vec q\tp \vec R \vec q$ is called the \emph{energy} of the vector $\vec q$. 

For a strictly convex and differentiable function $\psi : \Omega \to \Real$, the \emph{Bregman divergence} under $\psi$ is the function 
$$  
D_{\psi}(\vec x', \vec x) \defas \psi(\vec x') - \psi(\vec x) - \nabla \psi(\vec x)\tp \cdot (\vec x' - \vec x), 
$$
where $\vec x'$, $\vec x \in \Omega$. 
The Bregman divergence is in general not symmetric, but it is nonnegative and satisfies $D_\psi(\vec x', \vec x) = 0$ if $\vec x'=\vec x$. 
The \emph{Legendre dual} of $\psi$ is the function $\psi^\star: \Real^E \to \Real$ defined by
$$ \psi^\star(\vec y) \defas \sup_{\vec x \in \Real^E} (\vec x \tp \vec y - \psi(\vec x)), $$
Note that a vector $\vec x \in \Omega$ is a maximizer of $\vec x\tp \vec y - \psi(\vec x)$ iff $\vec y = \nabla \psi(\vec x)$. 

\subsection{Intuition: The network case}
\label{sec:network}
The interpretation of the dynamics defined in Section \ref{sec:dynamics-def} in the case where the constraint matrix is a network matrix is particularly appealing, as most of the statements below have physical interpretations in that case. Indeed, when $\vec A$ is derived from the (signed) node-edge incidence matrix of a graph with node set $N$ and edge set $E$ \footnote{More precisely, since we stipulated that $\vec A$ should be full rank, we omit from $N$ one of the nodes and omit the corresponding row from $\vec A$; this corresponds to ``grounding'' the potential value of this node to zero.}, the dynamics \eqref{eq:ode-cont} have a natural interpretation in terms of electrical networks: the vector $\vec b$ prescribes the external in-flow of current at each node, the matrix $\vec L$ is the (reduced) graph Laplacian, the vector $\vec p$ defines the Kirchhoff node potentials, the vector $\vec q$ is the electrical flow, and $\energy$ is the energy dissipation (per unit time) of the network. In this context, Lemma \ref{lem:thomson} below is nothing but the \emph{principle of least action} for electrical networks, also known as Thomson's principle, stating that the electrical flow is the feasible flow that minimizes energy dissipation \cite[Theorem IX.2]{Bollobas:1998}.
The duality relation \eqref{eq:duality} becomes Ohm's law. 
Proposition \ref{prop:pLp} is the conservation of energy principle, stating that if one replaces a network with a current source $s$ and a sink $\bar{s}$ with a single wire whose resistance is the effective resistance of the network, then the total energy in the system does not change \cite[Theorem IX.3]{Bollobas:1998}. 
Proposition \ref{lem:energy} is known as Tellegen's theorem. Of course, the difference with classical circuit theory is that the resistor values are dynamically adjusted in response to the flow: the Physarum dynamics adjusts the edges' resistances $c_j/x_j$, by updating the $x_j$ via \eqref{eq:adap-cont}. In the network case, the dynamics converges to the solution of a minimum-cost transshipment problem with cost function prescribed by $\vec c$ and node demands/supplies prescribed by $\vec b$ (see for example \cite[Theorem 1.2]{Straszak:2016:soda}). However, we remark that in the following statements we never need the fact that $\vec A$ is derived from a network matrix; our results hold more generally for any full-rank matrix. 

\subsection{Basic properties of the dynamics}
We start by giving an alternative characterization of the vector $\vec q \defas \vec C \vec A\tp \vec L^{-1} \vec b$. 
\begin{lemma}
\label{lem:thomson}
The vector $\vec q \in \Real^E$ defined in Section \ref{sec:dynamics-def} equals the unique optimal solution to the continuous quadratic optimization problem: 
\begin{align}
\label{eq:thomson}
\min\ & \vec f\tp \vec R \vec f \\
\notag \text{s.t. } & \vec A \vec f = \vec b. 
\end{align}
where $\vec R \defas \vec C^{-1} \in \Real^{E \times E}$ is the diagonal matrix with value $r_j \defas c_j/x_j$ for the $j$-th element of the main diagonal. Moreover, 
\begin{equation}
\label{eq:duality}
\vec R \vec q = \vec A\tp \vec p.
\end{equation}
\end{lemma}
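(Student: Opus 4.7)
The plan is to recognize the problem \eqref{eq:thomson} as a strictly convex quadratic program with linear equality constraints, so both the existence and uniqueness of the optimum and its characterization by KKT conditions are automatic. Strict convexity follows because $\vec R = \vec C^{-1}$ is a diagonal matrix with strictly positive entries $c_j/x_j$ (using $\vec x \in \Real^E_{>0}$ and $\vec c \in \Real^E_{>0}$), hence positive definite; this in turn also ensures that $\vec L = \vec A \vec C \vec A\tp$ is positive definite (and so invertible) whenever $\vec A$ has full row rank, justifying that $\vec p = \vec L^{-1} \vec b$ is well-defined.

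Next I would verify that $\vec q$ is feasible: directly, $\vec A \vec q = \vec A \vec C \vec A\tp \vec L^{-1} \vec b = \vec L \vec L^{-1} \vec b = \vec b$. Then I would write the Lagrangian $\Lambda(\vec f, \vec \lambda) = \vec f\tp \vec R \vec f - 2\vec \lambda\tp (\vec A \vec f - \vec b)$ (the factor $2$ is cosmetic and makes the identification of $\vec\lambda$ with $\vec p$ clean) and derive the first-order stationarity condition $2\vec R \vec f - 2 \vec A\tp \vec \lambda = \vec 0$, i.e., $\vec R \vec f = \vec A\tp \vec \lambda$, equivalently $\vec f = \vec C \vec A\tp \vec \lambda$.

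Substituting this stationary form into the feasibility constraint gives $\vec A \vec C \vec A\tp \vec \lambda = \vec b$, that is, $\vec L \vec \lambda = \vec b$, so $\vec \lambda = \vec L^{-1} \vec b = \vec p$. Plugging back, the unique KKT point is $\vec f = \vec C \vec A\tp \vec p = \vec q$, which by strict convexity is the unique global minimum of \eqref{eq:thomson}. The stationarity equation evaluated at this point reads $\vec R \vec q = \vec A\tp \vec p$, which is exactly \eqref{eq:duality}.

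There is no real obstacle here; the only thing to be careful about is that every matrix inversion is legitimate, which reduces to noting that $\vec R \succ \vec 0$ pointwise on $\Omega$ and that $\vec A$ has full row rank, both of which are in force by the standing assumptions of Section \ref{sec:dynamics-def}. The proof is thus essentially a one-paragraph application of Lagrange duality / KKT for strictly convex quadratics.
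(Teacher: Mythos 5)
Your proof is correct, but it takes a genuinely different route from the paper's. You characterize the optimum via the Lagrangian/KKT conditions: you set up the stationarity equation $\vec R \vec f = \vec A\tp \vec \lambda$, substitute into the constraint to get $\vec L\vec\lambda = \vec b$, identify $\vec\lambda = \vec p$, and thereby \emph{derive} $\vec q$ as the unique KKT point, with equation \eqref{eq:duality} falling out as the stationarity condition itself. The paper instead gives a direct variational argument: it first obtains \eqref{eq:duality} trivially by left-multiplying $\vec q = \vec C\vec A\tp\vec p$ by $\vec R$, then verifies optimality by a completion-of-squares decomposition. For any feasible $\vec f$, writing $\vec g = \vec f - \vec q$ (so $\vec A\vec g = \vec 0$) gives $\vec f\tp\vec R\vec f = \vec q\tp\vec R\vec q + 2\vec g\tp\vec R\vec q + \vec g\tp\vec R\vec g \ge \vec q\tp\vec R\vec q$, since the cross term vanishes by $\vec g\tp\vec R\vec q = \vec g\tp\vec A\tp\vec p = (\vec A\vec g)\tp\vec p = 0$ and the last term is nonnegative. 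Your KKT route is more systematic and makes the role of $\vec p$ as a dual multiplier explicit; the paper's argument is shorter, more elementary (no appeal to KKT theory, just positive definiteness and orthogonality), and closer in spirit to the classical proof of Thomson's principle in electrical-network theory, which is the intended intuition (Section~\ref{sec:network}). Both are fully rigorous; the choice is stylistic.
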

\begin{proof}
To establish \eqref{eq:duality}, simply left-multiply with $\vec R$ the identity $\vec q = \vec C \vec A\tp \vec p$. 
It remains to establish the first part of the claim. 
Since the objective function in \eqref{eq:thomson} is strictly convex, the problem has a unique optimal point. 
Consider any feasible point $\vec f$, and define $\vec g= \vec f - \vec q$. Then $\vec A \vec g = \vec b - \vec b = \vec 0$ and hence
$$
\vec f\tp \vec R \vec f = (\vec q + \vec g)\tp \vec R (\vec q + \vec g) =
\vec q\tp \vec R \vec q + 2 \vec g\tp \vec R \vec q + \vec g\tp \vec R \vec g 
\ge 
\vec q\tp \vec R \vec q,  
$$
since $\vec g\tp \vec R \vec g \ge 0$ and $\vec g\tp \vec R \vec q = \vec g\tp \vec A\tp \vec p = (\vec A \vec g)\tp \vec p = \vec 0\tp \vec p = 0$. Therefore, the objective function value of any feasible point $\vec f$ is at least as large as the objective function value of $\vec q$. 
\end{proof}


\begin{proposition}
\label{prop:pLp}
$\energy = \vec b\tp \vec L^{-1} \vec b = \vec p\tp \vec L \vec p$. 
\end{proposition}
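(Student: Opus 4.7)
The plan is to prove both equalities by direct substitution into the definitions given in Section \ref{sec:dynamics-def}, exploiting the symmetry of $\vec L$ and the fact that $\vec C$ and $\vec R$ are mutually inverse diagonal matrices.

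For the rightmost equality $\vec p\tp \vec L \vec p = \vec b\tp \vec L^{-1} \vec b$, I would use the defining equation $\vec L \vec p = \vec b$ for $\vec p$. Left-multiplying by $\vec p\tp$ gives $\vec p\tp \vec L \vec p = \vec p\tp \vec b$. Since $\vec p = \vec L^{-1} \vec b$ and $\vec L^{-1}$ is symmetric (as $\vec L$ is), we have $\vec p\tp \vec b = \vec b\tp \vec L^{-1} \vec b$, completing that step.

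For the first equality $\energy = \vec p\tp \vec L \vec p$, I would expand the definition $\energy = \vec q\tp \vec R \vec q$ using $\vec q = \vec C \vec A\tp \vec p$:
$$ \energy = (\vec C \vec A\tp \vec p)\tp \vec R (\vec C \vec A\tp \vec p) = \vec p\tp \vec A \vec C \vec R \vec C \vec A\tp \vec p, $$
where I used that $\vec C\tp = \vec C$ since $\vec C$ is diagonal. Because $\vec R = \vec C^{-1}$, the middle product collapses as $\vec C \vec R \vec C = \vec C$, and then $\vec A \vec C \vec A\tp = \vec L$ by definition, yielding $\vec p\tp \vec L \vec p$.

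There is no real obstacle here: the statement is essentially a bookkeeping identity that follows from unwinding the definitions, and it mirrors the ``conservation of energy'' interpretation mentioned in Section \ref{sec:network} (the energy dissipated by the flow $\vec q$ equals the energy of a single equivalent resistor with resistance $\vec L^{-1}$ driven by source $\vec b$).
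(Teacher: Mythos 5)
Your proof is correct and follows essentially the same route as the paper: both expand $\vec q$, use $\vec C \vec R \vec C = \vec C$ to collapse the middle product, and recognize $\vec A\vec C\vec A\tp=\vec L$. The only cosmetic difference is that you write $\vec q=\vec C\vec A\tp\vec p$ and handle the $\vec p\tp\vec L\vec p=\vec b\tp\vec L^{-1}\vec b$ step separately, whereas the paper substitutes $\vec p=\vec L^{-1}\vec b$ throughout from the start.
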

\begin{proof}
$\vec q\tp \vec R \vec q = (\vec b\tp \vec L^{-1} \vec A \vec C) \vec R (\vec C \vec A\tp \vec L^{-1} \vec b) = (\vec b\tp \vec L^{-1}) (\vec A \vec C \vec A\tp) (\vec L^{-1} \vec b) = $ \\ $ = \vec p\tp \vec L \vec p$. 
\end{proof}

\begin{proposition}
\label{lem:energy}
Let $\vec f$ satisfy $\vec A \vec f = \vec b$. Then
\begin{equation}
\label{eq:energy}
\vec f\tp \vec A\tp \vec p(t) = \energy(t). 
\end{equation}
\end{proposition}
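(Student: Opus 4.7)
The plan is to reduce the claim to Proposition \ref{prop:pLp} via a one-line manipulation that exploits the feasibility hypothesis $\vec A \vec f = \vec b$ together with the definition $\vec p = \vec L^{-1} \vec b$. The intuition (matching the electrical-network interpretation flagged as Tellegen's theorem in Section \ref{sec:network}) is that the power delivered to the network by \emph{any} $\vec b$-feasible flow $\vec f$, measured against the true node potentials $\vec p$, is the same as the dissipated energy $\energy$; this is essentially a reformulation of $\vec b\tp \vec p = \energy$.

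First, I would transpose to move $\vec A\tp$ across the inner product and apply the hypothesis:
\begin{equation*}
\vec f\tp \vec A\tp \vec p = (\vec A \vec f)\tp \vec p = \vec b\tp \vec p.
\end{equation*}
Next, I would substitute $\vec p = \vec L^{-1} \vec b$, which is the defining equation of $\vec p$ from Section \ref{sec:dynamics-def}, to obtain $\vec b\tp \vec p = \vec b\tp \vec L^{-1} \vec b$. Finally, I would invoke Proposition \ref{prop:pLp}, which gives $\vec b\tp \vec L^{-1} \vec b = \energy$, chaining the three equalities.

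There is no real obstacle here; the statement is a direct algebraic consequence of feasibility, the definition of $\vec p$, and the identity already recorded in Proposition \ref{prop:pLp}. The only thing to be careful about is that all quantities depend on $t$ through $\vec x(t)$ (hence $\vec R(t), \vec L(t), \vec p(t), \vec q(t)$), but since the hypothesis $\vec A \vec f = \vec b$ is time-independent and the manipulation above is pointwise in $t$, the identity holds for every $t\ge 0$ at which the dynamics is defined.
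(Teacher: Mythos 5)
Your proof is correct and follows essentially the same route as the paper: apply feasibility $\vec A\vec f=\vec b$ to reduce $\vec f\tp\vec A\tp\vec p$ to $\vec b\tp\vec p$, substitute the definition of $\vec p$, and conclude via Proposition \ref{prop:pLp}. The only cosmetic difference is that you land on the $\vec b\tp\vec L^{-1}\vec b$ form while the paper uses $\vec p\tp\vec L\vec p$; Proposition \ref{prop:pLp} equates both to $\energy$, so this is the same argument.
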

\begin{proof}
Since $\vec A \vec f=\vec b$, we have 
$ \vec p\tp \vec A \vec f = \vec p\tp \vec b = \vec p\tp \vec L \vec p = \energy. $
The last equality is due to Proposition \ref{prop:pLp}. 
\end{proof}

\section{Convergence to the feasible region}
\label{sec:feasibility}
In this section we discuss the time of convergence to the feasible region $\vec A \vec x = \vec b$. In particular, we aim to show that feasibility is invariant under the dynamics: a feasible starting point remains feasible at all times. It turns out that a stronger property holds: the Euclidean norm of the ``infeasibility error'' $\vec e \defas \vec A \vec x - \vec b$ approaches zero exponentially fast (Lemma \ref{lem:convergence-to-feasibility}). 

\begin{proposition}
\label{prop:feasibility}
$\vec A \dot{\vec x} = \vec b - \vec A \vec x. $
\end{proposition}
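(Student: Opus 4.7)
The plan is to derive the identity directly from the definition of the dynamics in vector form, equation \eqref{eq:ode-cont}, and the definition of $\vec q$.

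First I would left-multiply the ODE $\dot{\vec x} = \vec C \vec A\tp \vec L^{-1} \vec b - \vec x$ by $\vec A$ to obtain
\[
\vec A \dot{\vec x} = \vec A \vec C \vec A\tp \vec L^{-1} \vec b - \vec A \vec x.
\]
Now, by the definition of $\vec L$, we have $\vec A \vec C \vec A\tp = \vec L$, so
\[
\vec A \vec C \vec A\tp \vec L^{-1} \vec b = \vec L \vec L^{-1} \vec b = \vec b,
\]
using that $\vec L$ is nonsingular (as already noted in the definition of the dynamics). Substituting this yields $\vec A \dot{\vec x} = \vec b - \vec A \vec x$, which is exactly the claim.

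Alternatively, and perhaps conceptually cleaner, one can appeal to Lemma \ref{lem:thomson}: the vector $\vec q$ is the unique optimal solution of the energy-minimization problem \eqref{eq:thomson}, whose feasibility constraint is precisely $\vec A \vec q = \vec b$. Substituting into $\dot{\vec x} = \vec q - \vec x$ and left-multiplying by $\vec A$ then gives the result in one line. Since there are no subtleties (no invertibility issues beyond the already-assumed nonsingularity of $\vec L$, and no analytic considerations), there is no real obstacle here; the statement is essentially a direct unpacking of the definitions and will serve as the base identity for the exponential decay of the infeasibility error $\vec e = \vec A \vec x - \vec b$ announced in Lemma \ref{lem:convergence-to-feasibility}.
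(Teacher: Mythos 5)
Your first derivation is exactly the paper's proof: left-multiply the ODE \eqref{eq:ode-cont} by $\vec A$ and use $\vec A \vec C \vec A\tp = \vec L$ together with $\vec L \vec L^{-1} = \vec I$. The alternative appeal to Lemma \ref{lem:thomson} is a valid shortcut, but the paper's argument is the one you give first.
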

\begin{proof}
Using the definition of the dynamics \eqref{eq:ode-cont}, 
$\vec A \dot{\vec x} = \vec A \vec C \vec A\tp \vec L^{-1} \vec b - \vec A \vec x = \vec L \vec L^{-1} \vec b - \vec A \vec x = \vec b - \vec A \vec x. $
\end{proof}

\begin{lemma}
\label{lem:convergence-to-feasibility}
Let $\vec e(t) \defas \vec A \vec x(t) - \vec b$. Then $\norm[2]{\vec e(t)} = \norm[2]{\vec e(0)} \exp{(-t)}$ for any $t>0$. 
In particular, if $\vec A \vec x(0)=\vec b$ then $\vec A \vec x(t)=\vec b$ for all $t>0$. 
\end{lemma}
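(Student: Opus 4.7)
The plan is to reduce the statement to a one-line linear ODE for the error vector $\vec e(t)$ itself, using Proposition \ref{prop:feasibility} as the key input. First, I would differentiate the definition $\vec e(t) = \vec A \vec x(t) - \vec b$ with respect to time. Since $\vec b$ is constant, this gives $\dot{\vec e}(t) = \vec A \dot{\vec x}(t)$, and then Proposition \ref{prop:feasibility} immediately rewrites the right-hand side as $\vec b - \vec A \vec x(t) = -\vec e(t)$. So the infeasibility error satisfies the decoupled linear system $\dot{\vec e}(t) = -\vec e(t)$ coordinatewise.

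From here the proof is a standard ODE computation: each component $e_i(t)$ obeys $\dot{e}_i = -e_i$, whose unique solution with given initial value is $e_i(t) = e_i(0) e^{-t}$. Hence $\vec e(t) = e^{-t}\, \vec e(0)$, and taking Euclidean norms pulls out the scalar factor to yield $\|\vec e(t)\|_2 = \|\vec e(0)\|_2 \exp(-t)$, which is the stated bound. The invariance of feasibility under the dynamics then follows as a direct corollary: if $\vec A \vec x(0) = \vec b$, then $\vec e(0) = \vec 0$, so $\vec e(t) = \vec 0$ and thus $\vec A \vec x(t) = \vec b$ for all $t > 0$.

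I do not expect any real obstacle here, since all the work has already been done in Proposition \ref{prop:feasibility}, which encodes the remarkable algebraic cancellation $\vec A \vec C \vec A\tp \vec L^{-1} \vec b = \vec b$. The only point worth being careful about is that the solution $\vec x(t)$ to \eqref{eq:ode-cont} is taken to exist and remain in $\Omega = \Real^E_{>0}$ on $[0,\infty)$ (which is the cited result of Straszak and Vishnoi), so that $\vec C$, $\vec L$ and $\vec p$ are well-defined along the trajectory and Proposition \ref{prop:feasibility} can be applied pointwise in $t$; with that in hand the argument is just the elementary integration of a scalar linear ODE applied componentwise.
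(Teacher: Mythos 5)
Your proof is correct, and it relies on the same key input as the paper, namely Proposition~\ref{prop:feasibility}, which gives $\vec A\dot{\vec x}=\vec b-\vec A\vec x$. The only difference is organizational: the paper differentiates the scalar quantity $\|\vec e(t)\|_2^2$, expands the quadratic form $(\vec A\vec x-\vec b)^\top(\vec A\vec x-\vec b)$, obtains the scalar ODE $\frac{d}{dt}\|\vec e\|_2^2=-2\|\vec e\|_2^2$, and then takes square roots; you instead differentiate the vector $\vec e(t)$ directly, get the linear system $\dot{\vec e}=-\vec e$, solve it componentwise to obtain $\vec e(t)=e^{-t}\vec e(0)$, and take norms at the end. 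Your route is arguably a touch cleaner and establishes the slightly stronger vector identity $\vec e(t)=e^{-t}\vec e(0)$, from which the norm statement falls out immediately; the paper's route avoids any explicit appeal to the vector ODE solution formula at the modest cost of expanding a quadratic. Your remark about needing existence of the trajectory in $\Omega$ on $[0,\infty)$ so that Proposition~\ref{prop:feasibility} applies pointwise is an appropriate observation that the paper leaves implicit.
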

\begin{proof}
We have
\begin{align*}
\frac{d}{dt} \norm[2]{\vec e}^2 &= \frac{d}{dt} (\vec A \vec x- \vec b)\tp (\vec A \vec x- \vec b) \\
&= \frac{d}{dt} \left( \vec x\tp \vec A\tp \vec A \vec x - 2 \vec b\tp \vec A \vec x + \vec b\tp \vec b \right) = \\
&= 2 \vec x\tp \vec A\tp \vec A \dot{\vec x} - 2 \vec b\tp \vec A \dot{\vec x} \\
&=2 (\vec x\tp \vec A\tp - \vec b\tp) \vec A \dot{\vec x} \\
&=2 (\vec A \vec x - \vec b)\tp (\vec b - \vec A \vec x) \\
&= -2 \norm[2]{\vec e}^2,   
\end{align*}
where we used Proposition \ref{prop:feasibility}. 
Solution of the differential equation above yields $\norm[2]{\vec e(t)}^2 = \norm[2]{\vec e(0)}^2 \exp{(-2t)}.$ Taking square roots yields the claim. 
\end{proof}


\section{Convergence in cost value}
\label{sec:cost}
To analyze the convergence in cost values, and eventually prove Theorem \ref{thm:main-result}, it will be useful to consider normalized versions of the candidate solution $\vec x(t)$ and of the optimal vector $\vec x^*$. 
For any $j \in E$, let $\xi_j(t) \defas c_j x_j(t) / \cost(\vec x(t))$, $\xi_j^* \defas c_j x_j^* / \opt$. 
%
Then, by construction, $\cvec\tp \vec \xi^* = \cvec\tp \vec \xi(t) = 1$, $\vec \xi(t) > \vec 0$ and $\vec \xi^* \ge \vec 0$, so $\vec \xi(t)$ and $\vec \xi^*$ can be interpreted as probability distributions over $E$. The \emph{relative entropy} of $\vec \xi^*$ with respect to $\vec \xi$, or \emph{Kullback-Leibler divergence} $\kld(\vec \xi^*, \vec \xi(t))$, is defined as: 
$$ \kld(\vec \xi^*, \vec \xi(t)) \defas \sum_{j \in E} \xi_j^* \ln \frac{\xi_j^*}{\xi_j(t)}. $$
The KL divergence is the Bregman divergence of the negative entropy function $\vec x \mapsto \sum_j x_j \ln x_j$; it is always nonnegative, and it is zero iff $\vec \xi^* = \vec \xi(t)$ (see for example \cite[Chapter 1]{Amari:2016}). 

We can now define the potential function that is central to our analysis. Let
\begin{equation}
\label{eq:phi}
 \Phi(t) \defas \ln \frac{\cost(\vec x(t))}{\opt} + \kld(\vec \xi^*, \vec \xi(t)). 
\end{equation}
Note that the first term is nonnegative whenever $\vec x(t)$ is feasible for the LP, and the second term is always nonnegative. Similar to previous analysis of Physarum dynamics based on potential functions \cite{Becchetti:2013,Straszak:2016:soda,Straszak:2016:itcs}, the potential function $\Phi$ contains a term that depends on the cost of the candidate solution $\vec x$, and an ``entropic barrier'' term that captures the geometry of the feasible region: in particular, the second term penalizes distributions that get too close to the boundary of the positive orthant whenever the corresponding coordinate of the optimal solution is not on the boundary (that is, $\xi_j(t) \approx 0$ but $\xi_j^*>0$). A difference with respect to previous papers is that the potential function \eqref{eq:phi} is dimensionless, which is natural since our aim is to bound the relative, rather than  absolute, approximation error. 

To proceed further, we study the evolution of the potential function over time. We start by bounding the derivative of various terms that compose it. 
\begin{lemma}
\label{lem:cost-vs-opt}
For any $\vec x(t) \in \Real^E_{>0}$, 
\begin{equation}
\label{eq:cost-vs-opt}
 \frac{d}{dt} \ln \frac{\cost(\vec x(t))}{\opt} \le {\left(\frac{\energy(t)}{\cost(\vec x(t))}\right)}^{1/2} - 1. 
\end{equation}
\end{lemma}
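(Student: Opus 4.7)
The plan is to reduce the inequality to a single application of Cauchy--Schwarz, after computing the time derivative of $\ln \cost(\vec x(t))$ from the dynamics.

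First I would note that since $\opt$ is constant in $t$, we have
$$ \frac{d}{dt} \ln \frac{\cost(\vec x(t))}{\opt} = \frac{1}{\cost(\vec x(t))} \cdot \vec c\tp \dot{\vec x}(t). $$
Using the Physarum dynamics in the form $\dot{\vec x} = \vec q - \vec x$, this simplifies to
$$ \frac{d}{dt} \ln \frac{\cost(\vec x(t))}{\opt} = \frac{\vec c\tp \vec q(t)}{\cost(\vec x(t))} - 1. $$
So the whole task becomes showing $\vec c\tp \vec q \le \sqrt{\energy \cdot \cost(\vec x)}$.

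The key trick is to split each term of $\vec c\tp \vec q = \sum_{j \in E} c_j q_j$ by writing $c_j = \sqrt{c_j/x_j}\cdot\sqrt{c_j x_j}$. Setting $a_j \defas q_j \sqrt{c_j/x_j}$ and $b_j \defas \sqrt{c_j x_j}$ (both well defined since $\vec x > \vec 0$ and $\vec c > \vec 0$), Cauchy--Schwarz gives
$$ \vec c\tp \vec q \;\le\; \Bigl|\sum_j a_j b_j\Bigr| \;\le\; \sqrt{\sum_j a_j^2}\,\sqrt{\sum_j b_j^2} \;=\; \sqrt{\vec q\tp \vec R \vec q}\,\sqrt{\vec c\tp \vec x} \;=\; \sqrt{\energy \cdot \cost(\vec x)}, $$
where I used $\vec R = \diag{(c_j/x_j)_{j\in E}}$ and the definition $\energy = \vec q\tp \vec R \vec q$. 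Dividing by $\cost(\vec x)$ and combining with the derivative formula above yields the claimed bound.

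No step here looks like it would be the main obstacle: the derivative computation is immediate from $\dot{\vec x} = \vec q - \vec x$, and the algebraic estimate is a one-line Cauchy--Schwarz once the splitting $c_j = \sqrt{c_j/x_j}\cdot\sqrt{c_j x_j}$ is spotted. The only subtlety to remark on is that the inequality does not require any sign assumption on $\vec c\tp \vec q$: if $\vec c\tp \vec q < 0$, the bound is trivial since the right-hand side is nonnegative; otherwise Cauchy--Schwarz supplies it directly. Notably, the argument does not use feasibility of $\vec x(t)$, consistently with the hypothesis $\vec x(t) \in \Real^E_{>0}$ in the lemma.
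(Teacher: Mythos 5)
Your proof is correct and follows exactly the paper's argument: differentiate $\ln \cost(\vec x(t))$ using the dynamics to reduce the claim to $\vec c\tp \vec q \le \sqrt{\energy\cdot\cost(\vec x)}$, then apply Cauchy--Schwarz with the weighting $r_j = c_j/x_j$ (the paper writes $c_j q_j = r_j q_j x_j$ and splits into $\sqrt{r_j} q_j$ and $\sqrt{r_j} x_j$, which is identical to your $a_j$, $b_j$). The only cosmetic difference is your explicit remark on the sign of $\vec c\tp \vec q$; the paper leaves this implicit.
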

\begin{proof}
\begin{align*}
\frac{d}{dt} \ln \frac{\sol}{\opt} &= \frac{\frac{d}{dt} \sol}{\sol} \\
&= \frac{\vec c\tp \dot{\vec x}}{\sol} \\
&\stackrel{\eqref{eq:adap-cont}}{=} \frac{\vec c\tp (\vec q - \vec x)}{\sol} \\
&= \frac{\vec c\tp \vec q}{\sol} - 1 \\
&\stackrel{(*)}{=} \frac{\sum_{j \in E} r_j q_j x_j}{\sol} - 1 \\
&\stackrel{(**)}{\le} \frac{ \left( \sum_{j \in E} r_j q_j^2 \right)^{1/2} \left( \sum_{j \in E} r_j x_j^2 \right)^{1/2}}{\sol} - 1 \\
&= \frac{{(\energy \sol)}^{1/2}}{\sol} - 1, 
\end{align*}
where in the third equality we used the definition of the dynamics, in (*) we used $r_j = c_j/x_j$, and in (**) we used the Cauchy-Schwarz inequality. For the last equality, we used the definition of the energy $\energy=\vec q\tp \vec R \vec q$ and (once more) the fact $r_j = c_j/x_j$. 
\end{proof}

The following lemma is instrumental in bounding the time derivative of the KL divergence term in \eqref{eq:phi}. 
\begin{lemma}
\label{lem:x-log-x}
For any $\vec x(t) \in \Real^E_{>0}$, 
\begin{equation}
\label{eq:x-log-x}
\frac{d}{dt} \sum_{j \in E} \frac{c_j x_j^*}{\opt} \ln \frac{x_j^*}{x_j(t)} = 1 - \frac{\energy(t)}{\opt}. 
\end{equation}
\end{lemma}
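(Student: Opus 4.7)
The plan is to compute the derivative directly, since the sum is a finite linear combination of $\ln x_j(t)$ with constant coefficients. Because $x_j^*$ and $\opt$ do not depend on $t$, I would write
$$ \frac{d}{dt} \sum_{j \in E} \frac{c_j x_j^*}{\opt} \ln \frac{x_j^*}{x_j(t)} = -\sum_{j \in E} \frac{c_j x_j^*}{\opt} \cdot \frac{\dot x_j(t)}{x_j(t)}, $$
and then substitute the Physarum dynamics $\dot x_j = q_j - x_j$. This splits the right-hand side as
$$ \sum_{j \in E} \frac{c_j x_j^*}{\opt} - \sum_{j \in E} \frac{c_j x_j^*}{\opt} \cdot \frac{q_j}{x_j}. $$
The first sum is $\cvec\tp \vec \xi^* = 1$ by the normalization of $\vec \xi^*$ given just before the lemma.

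For the second sum, the key observation is the duality relation \eqref{eq:duality}: $\vec R \vec q = \vec A\tp \vec p$, which componentwise reads $c_j q_j / x_j = (\vec A\tp \vec p)_j$ since $r_j = c_j/x_j$. Substituting this in, the second sum becomes
$$ \sum_{j \in E} \frac{x_j^*}{\opt} (\vec A\tp \vec p)_j = \frac{1}{\opt} (\vec x^*)\tp \vec A\tp \vec p = \frac{1}{\opt} (\vec A \vec x^*)\tp \vec p = \frac{\vec b\tp \vec p}{\opt}, $$
using feasibility of $\vec x^*$ for \eqref{eq:LP}. Finally, Proposition \ref{prop:pLp} gives $\vec b\tp \vec p = \vec p\tp \vec L \vec p = \energy(t)$, so the second sum equals $\energy(t)/\opt$, which combined with the first sum yields exactly $1 - \energy(t)/\opt$.

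There is no real obstacle here: the identity follows from an unwinding of definitions once the duality relation $\vec R \vec q = \vec A\tp \vec p$ is used to convert a weighted sum involving $q_j/x_j$ into an inner product against $\vec A\tp \vec p$, after which Proposition \ref{prop:pLp} closes the computation. The only point that requires mild care is keeping the role of the constant factor $1/\opt$ straight and remembering that $\vec A \vec x^* = \vec b$ (rather than some perturbed right-hand side), which is why the statement is phrased in terms of the optimal point $\vec x^*$ and not a generic feasible vector.
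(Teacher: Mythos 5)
Your proof is correct and follows essentially the same route as the paper's: differentiate, substitute the dynamics, use $\cvec\tp\vec\xi^*=1$ for the first sum, and the duality relation $\vec R\vec q=\vec A\tp\vec p$ for the second. The only cosmetic difference is that you inline the argument of Proposition~\ref{lem:energy} (via $\vec A\vec x^*=\vec b$ and Proposition~\ref{prop:pLp}) rather than citing it directly.
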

\begin{proof}
We start by computing
$$
\sum_j \frac{c_j}{\opt} x_j^* \ln \frac{x_j^*}{x_j} = - \frac{1}{\opt} \sum_j c_j x_j^* \ln x_j + \frac{1}{\opt} \sum_j c_j x_j^* \ln x_j^*. 
$$
The second term above is constant, so
\begin{align*}
\frac{d}{dt} \sum_j \frac{c_j}{\opt} x_j^* \ln \frac{x_j^*}{x_j} &= - \frac{1}{\opt} \sum_j c_j x_j^* \frac{\dot{x}_j}{x_j} = \\
&= -\frac{1}{\opt} \sum_j c_j x_j^* \frac{q_j-x_j}{x_j} = \\
&= 1- \frac{1}{\opt} \sum_j r_j q_j x_j^* = \\
&= 1- \frac{1}{\opt} \vec x^{*\top} \vec R \vec q = \\
&\using{eq:duality}{=} 1- \frac{1}{\opt} \vec x^{*\top} \vec A\tp \vec p = \\
&\using{eq:energy}{=} 1- \frac{\energy}{\opt}. 
\end{align*}
We used \eqref{eq:duality} (Lemma \ref{lem:thomson}) and the alternative characterization of the energy \eqref{eq:energy} given by Proposition \ref{lem:energy}. 
\end{proof}



\begin{lemma}
\label{lem:kl}
For any $\vec x(t) \in \Real^E_{>0}$, 
$$ 
\frac{d}{dt} \kld(\vec \xi^*, \vec \xi(t)) \le 
\left({\frac{\energy(t)}{\cost(\vec x(t))}}\right)^{1/2} - \frac{\energy(t)}{\opt} 
$$
\end{lemma}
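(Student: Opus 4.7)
The plan is to reduce the claim to the two preceding lemmas by rewriting the KL divergence in a form where their hypotheses apply directly. The key algebraic observation is that, because $\sum_{j\in E} \xi_j^* = \sum_{j\in E} c_j x_j^*/\opt = \cost(\vec x^*)/\opt = 1$, one can split the $\ln$ inside the KL sum:
\begin{align*}
\kld(\vec\xi^*, \vec\xi(t))
&= \sum_{j \in E} \frac{c_j x_j^*}{\opt} \ln \frac{c_j x_j^*/\opt}{c_j x_j(t)/\cost(\vec x(t))} \\
&= \sum_{j \in E} \frac{c_j x_j^*}{\opt} \ln \frac{x_j^*}{x_j(t)} + \ln \frac{\cost(\vec x(t))}{\opt}.
\end{align*}
In particular, the second summand above is precisely the quantity whose derivative is controlled by Lemma \ref{lem:cost-vs-opt}, and the first summand is the one appearing on the left of Lemma \ref{lem:x-log-x}.

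Differentiating the decomposition with respect to $t$, I would then apply Lemma \ref{lem:x-log-x} to replace the derivative of the first piece by $1 - \energy(t)/\opt$ (an equality), and Lemma \ref{lem:cost-vs-opt} to upper bound the derivative of the second piece by $(\energy(t)/\cost(\vec x(t)))^{1/2} - 1$. The constant $-1$ from the first piece and the $+1$ from the second piece cancel, leaving exactly the claimed bound
$$ \left(\frac{\energy(t)}{\cost(\vec x(t))}\right)^{1/2} - \frac{\energy(t)}{\opt}. $$

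There is essentially no obstacle; the only point requiring care is noticing that the ``correction'' term $\ln(\cost(\vec x(t))/\opt)$ created by normalizing $\vec \xi(t)$ and $\vec \xi^*$ to probability distributions is exactly the term already handled by Lemma \ref{lem:cost-vs-opt}, so the two lemmas combine cleanly. No additional estimates, duality identities, or energy computations are needed at this step, since those have already been absorbed into the two lemmas being invoked.
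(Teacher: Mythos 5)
Your proposal is correct and matches the paper's proof essentially verbatim: the same split of the KL divergence into the unnormalized term plus $\ln(\cost(\vec x(t))/\opt)$, followed by Lemma \ref{lem:x-log-x} (as an equality) and Lemma \ref{lem:cost-vs-opt} (as an inequality), with the $\pm 1$ cancelling.
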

\begin{proof}
\begin{align*}
\frac{d}{dt} \sum_j \frac{c_j x_j^*}{\opt} \ln \frac{c_j x_j^* / \opt}{c_j x_j / \sol} &= 
\frac{d}{dt} \sum_j \frac{c_j x_j^*}{\opt} \ln \frac{x_j^*}{x_j} + \frac{d}{dt} \sum_j \frac{c_j x_j^*}{\opt} \ln \frac{\sol}{\opt} \\
&\using{eq:x-log-x}{=} 1 - \frac{\energy}{\opt}  + \frac{d}{dt} \ln \frac{\sol}{\opt} \cdot 1 \\
&\using{eq:cost-vs-opt}{\le} 1 - \frac{\energy}{\opt} + \left(\frac{\energy}{\sol}\right)^{1/2} - 1 \\
&= \left({\frac{\energy}{\sol}}\right)^{1/2} - \frac{\energy}{\opt}. 
\end{align*}
In the second equality we used Lemma \ref{lem:x-log-x}; for the inequality we applied Lemma \ref{lem:cost-vs-opt}. 
\end{proof}

We are ready to prove that the more expensive a solution is, the more the potential function has to decrease. 
\begin{lemma}
\label{lem:improvement}
If $\cost(\vec x(t)) \ge (1+\eps)^2 \opt$ for some $\eps \in (0,1/2)$, then 
$(d/dt) \Phi(t) \le -\eps/2. $ 
\end{lemma}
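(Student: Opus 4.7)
The plan is to add the inequalities from Lemmas \ref{lem:cost-vs-opt} and \ref{lem:kl}. Since $\Phi$ is, by its very definition, the sum of $\ln(\cost(\vec x)/\opt)$ and $\kld(\vec\xi^*,\vec\xi)$, these two bounds combine to yield
$$\frac{d}{dt}\Phi(t) \le 2\left(\frac{\energy(t)}{\cost(\vec x(t))}\right)^{1/2} - 1 - \frac{\energy(t)}{\opt}.$$
From here, everything reduces to an elementary optimization in the two nonnegative scalars $\energy(t)$ and $\cost(\vec x(t))$; no further information from the dynamics is needed.

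Next, I would invoke the hypothesis $\cost(\vec x(t)) \ge (1+\eps)^2 \opt$ to eliminate $\cost(\vec x(t))$ from the square-root term: introducing $u \defas \energy(t)/\opt \ge 0$, the inequality becomes
$$\frac{d}{dt}\Phi(t) \le \frac{2\sqrt{u}}{1+\eps} - 1 - u.$$
The right-hand side is a concave quadratic in $w \defas \sqrt{u}$, namely $-w^2 + 2w/(1+\eps) - 1$, with unconstrained maximum at $w^\star = 1/(1+\eps) > 0$ and maximal value $1/(1+\eps)^2 - 1 = -(2\eps+\eps^2)/(1+\eps)^2$. Since $w^\star \ge 0$, this maximum is attained within the feasible range, so
$$\frac{d}{dt}\Phi(t) \le -\frac{2\eps+\eps^2}{(1+\eps)^2}.$$

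It remains to verify that this quantity is at most $-\eps/2$ for $\eps \in (0,1/2)$. Because $(1+\eps)^2 < 4$ on this range, dropping the $\eps^2$ term in the numerator already suffices: $(2\eps+\eps^2)/(1+\eps)^2 \ge 2\eps/(1+\eps)^2 > \eps/2$. I do not expect any genuine obstacle here; the only point worth flagging is that both Lemmas \ref{lem:cost-vs-opt} and \ref{lem:kl} contribute the same ``noise'' term $(\energy/\cost(\vec x))^{1/2}$, producing a factor of $2$ in front of the square root after summation. It is precisely this factor that forces the hypothesis to be stated with a $(1+\eps)^2$ margin rather than a $(1+\eps)$ margin: the extra $(1+\eps)$ of safety is exactly what is needed to absorb the doubling and still leave a definite $\Theta(\eps)$ decrease in the potential.
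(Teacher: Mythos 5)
Your proof is correct. It starts from the same combined inequality as the paper,
$$\frac{d}{dt}\Phi(t) \le 2\left(\frac{\energy(t)}{\cost(\vec x(t))}\right)^{1/2} - 1 - \frac{\energy(t)}{\opt},$$
but then diverges in how it handles the resulting one-variable estimate. The paper sets $\gamma = \energy/\opt$, $\delta = 1/(1+\eps)$, rewrites the bound as $-2(1-\delta)\gamma^{1/2} - (1-\gamma^{1/2})^2$, and performs a two-case analysis depending on whether $(1-\gamma^{1/2})^2$ is at least $\eps/2$. You instead observe directly that the right-hand side, viewed as a function of $w = \sqrt{u}$ with $u = \energy/\opt$, is a concave quadratic $-w^2 + 2w/(1+\eps) - 1$, and you simply read off its global maximum $-(2\eps+\eps^2)/(1+\eps)^2$, then verify that this is below $-\eps/2$ on $(0,1/2)$. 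Both are completing-the-square arguments in disguise, but yours avoids the case split entirely and is, if anything, slightly cleaner; it also makes the role of the $(1+\eps)^2$ margin in the hypothesis (absorbing the factor of two in front of the square root) explicit and transparent, as you point out. The one cosmetic remark: the paper writes the starting identity for $(d/dt)\Phi$ with an equals sign where it should be $\le$ (since Lemma~\ref{lem:kl} is an inequality); your version correctly keeps the inequality throughout.
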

\begin{proof}
Let $\gamma \defas \energy/\opt$ and $\delta\defas 1/(1+\eps)$. 
Combining Lemma \ref{lem:cost-vs-opt} and Lemma \ref{lem:kl} yields
\begin{align*}
\frac{d}{dt} \Phi(t) &= 2 \left({\frac{\energy}{\sol}}\right)^{1/2} - 1  - \frac{\energy}{\opt} \\
&\stackrel{(*)}{\le}  2\delta \gamma^{1/2} -1 - \gamma =\\
&= -2(1-\delta) \gamma^{1/2} - (1-\gamma^{1/2})^2, 
\end{align*}
where in $(*)$ we used the assumption $\cost(\vec x) \ge (1+\eps)^2 \opt$. 
Note that both summands in the last expression are negative. 
We distinguish two cases. If $(1-\gamma^{1/2})^2 \ge \eps/2$, then by ignoring the first summand above we obtain
$$ \frac{d}{dt}{\Phi} \le -(1-\gamma^{1/2})^2 \le -\eps/2, $$
which proves the claim. 
Otherwise, if $(1-\gamma^{1/2})^2 < \eps/2$, then $\gamma^{1/2} > 1-(\eps/2)^{1/2}$ and by ignoring the second summand we obtain
$$ \frac{d}{dt}{\Phi} \le -2(1-\delta)(1-(\eps/2)^{1/2}) \le -2 \cdot \frac{1-(\eps/2)^{1/2}}{1+\eps} \eps \le -2 \frac{1/2}{3/2} \eps < -\eps/2. $$
\end{proof}

The next lemma ensures that, for feasible solutions, the energy is always a valid lower bound on the cost. 
\begin{lemma}
\label{lem:energy-vs-cost}
Suppose $\vec x(t) \ge 0$, $\vec A \vec x(t) = \vec b$. Then $\energy(t) \le \cost(\vec x(t))$. 
\end{lemma}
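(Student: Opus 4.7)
The plan is to use Thomson's principle, which is exactly the variational characterization of $\vec q$ given in Lemma \ref{lem:thomson}. Since $\vec q$ is the unique minimizer of $\vec f\tp \vec R \vec f$ over the affine subspace $\{\vec f : \vec A \vec f = \vec b\}$, and by hypothesis the current state $\vec x(t)$ itself lies in that subspace, I would simply plug $\vec f = \vec x(t)$ into the variational inequality to conclude
\[
\energy(t) \;=\; \vec q\tp \vec R \vec q \;\le\; \vec x(t)\tp \vec R \vec x(t).
\]

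Next I would compute the right-hand side explicitly using the definition $\vec R = \diag{(c_j/x_j)_{j \in E}}$: the $j$-th diagonal contribution to the quadratic form is $(c_j / x_j) \cdot x_j^2 = c_j x_j$, so $\vec x(t)\tp \vec R \vec x(t) = \sum_{j \in E} c_j x_j(t) = \cost(\vec x(t))$. Combining the two displays yields the claim.

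There is essentially no obstacle; the only point worth flagging is that the matrix $\vec R$ requires $\vec x(t) > \vec 0$ for its entries to be well-defined, which is consistent with the fact that trajectories of the dynamics live in $\Omega = \Real^E_{>0}$. The statement as written, with $\vec x(t) \ge \vec 0$, should be read either as restricted to the interior of the orthant (the relevant case for Section \ref{sec:cost}) or as extended by continuity to the boundary.
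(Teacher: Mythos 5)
Your proof is correct and takes essentially the same route as the paper: invoke the variational characterization of $\vec q$ from Lemma \ref{lem:thomson}, plug in $\vec f = \vec x(t)$, and simplify $\vec x\tp \vec R \vec x$ to $\cost(\vec x)$. The remark about $\vec x > \vec 0$ versus $\vec x \ge \vec 0$ is a fair observation about the hypothesis but does not change the substance.
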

\begin{proof}
By the assumption, $\vec x(t)$ is a feasible LP solution. 
By Lemma \ref{lem:thomson}, $\vec q(t)$ is a minimizer of the quadratic form $\vec f\tp \vec R \vec f$ among all vectors $\vec f$ satisfying $\vec A \vec f = \vec b$. One possible such vector is $\vec x$. Thus, 
\begin{equation}
\label{eq:energy-vs-cost}
\energy = \vec q\tp \vec R \vec q \le \vec x\tp \vec R \vec x = \sum_{j\in E} \frac{c_j}{x_j} x_j^2 = \sol. 
\end{equation}
\end{proof}

As a corollary, by Lemma \ref{lem:cost-vs-opt} the cost of a feasible solution does not increase over time. 
\begin{corollary}
\label{cor:monotone-cost}
Suppose $\vec x(t) \ge 0$, $\vec A \vec x(t) = \vec b$. Then $(d/dt) \cost(\vec x(t)) \le 0$. 
\end{corollary}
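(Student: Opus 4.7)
The plan is to chain together the two preceding results in the obvious way. From Lemma \ref{lem:energy-vs-cost}, feasibility of $\vec x(t)$ gives $\energy(t) \le \cost(\vec x(t))$, hence
$$ \left(\frac{\energy(t)}{\cost(\vec x(t))}\right)^{1/2} \le 1. $$
Substituting this into the bound of Lemma \ref{lem:cost-vs-opt} yields
$$ \frac{d}{dt} \ln \frac{\cost(\vec x(t))}{\opt} \le 0. $$
Since $\opt > 0$ is a positive constant and $\cost(\vec x(t)) > 0$ (note $\vec c > \vec 0$ and $\vec x(t) \ge \vec 0$ with $\vec x(t) \ne \vec 0$, as $\vec A \vec x(t) = \vec b$ with $\vec b \ne \vec 0$ from the assumption of a nonzero optimal solution), we have $\frac{d}{dt}\ln \cost(\vec x(t)) = \frac{(d/dt)\cost(\vec x(t))}{\cost(\vec x(t))}$, and non-positivity of this derivative is equivalent to $(d/dt)\cost(\vec x(t)) \le 0$.

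There is really no main obstacle here: the statement is essentially just the observation that the sole non-constant piece on the right-hand side of \eqref{eq:cost-vs-opt} is controlled by the energy-vs-cost inequality \eqref{eq:energy-vs-cost}. The only small subtlety is the positivity of $\cost(\vec x(t))$ needed to convert the logarithmic derivative back into an ordinary one, which follows because the costs are strictly positive by assumption and $\vec x(t)$ stays in $\Real^E_{>0}$ along trajectories.
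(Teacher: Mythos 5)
Your proof is correct and matches the paper's proof, which simply says "Combine Lemma \ref{lem:energy-vs-cost} and Lemma \ref{lem:cost-vs-opt}." Your additional remark on the positivity of $\cost(\vec x(t))$, needed to pass from the nonpositivity of the logarithmic derivative to that of the ordinary derivative, is a fair observation that the paper leaves implicit.
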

\begin{proof}
Combine Lemma \ref{lem:energy-vs-cost} and Lemma \ref{lem:cost-vs-opt}. 
\end{proof}

All ingredients are now into place to derive our main claim, from which Theorem \ref{thm:main-result} will directly follow. 
\begin{theorem}
\label{thm:main}
Suppose $\vec x(0) > 0$, $\vec A \vec x(0)=\vec b$. Then 
\begin{enumerate}
\item[(a)]
$\vec x(t)$ is feasible for LP \eqref{eq:LP} for any $t \ge 0$; 
\item[(b)]
$\cost(\vec x(t)) \le (1+\eps)\opt$ for all $$ t \ge \frac{\Phi(0)}{\eps/6} = \frac{6}{\eps} \bigg( \ln \frac{\cost(\vec x(0)) }{ \opt} + \kld(\vec \xi^*, \vec \xi(0))  \bigg). $$
\end{enumerate}
\end{theorem}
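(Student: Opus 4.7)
Part (a) follows at once from Lemma \ref{lem:convergence-to-feasibility}: the hypothesis $\vec A \vec x(0) = \vec b$ is preserved by the dynamics, and the trajectory stays in $\Omega = \Real^E_{>0}$, so $\vec x(t)$ is feasible for \eqref{eq:LP} at every $t \ge 0$.

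For part (b) I would run a contradiction argument that glues together the three ingredients already prepared: the potential decrease of Lemma \ref{lem:improvement}, the cost monotonicity of Corollary \ref{cor:monotone-cost}, and the fact that $\Phi(t)$ is a sum of two nonnegative quantities whenever $\vec x(t)$ is feasible. Suppose, for contradiction, that $\cost(\vec x(t^*)) > (1+\eps)\opt$ at some $t^* \ge 6\Phi(0)/\eps$. The first move is to rescale $\eps$ so that the $(1+\cdot)^2$ threshold in Lemma \ref{lem:improvement} matches the $(1+\eps)$ threshold we care about: setting $\eps' \defas \eps/3$, a direct expansion shows $(1+\eps')^2 \le 1+\eps$ for all $\eps \in (0,3)$, and $\eps' \in (0,1/2)$ for $\eps \in (0,3/2)$, placing us squarely in the hypothesis of Lemma \ref{lem:improvement}.

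Now part (a) together with Corollary \ref{cor:monotone-cost} gives $\cost(\vec x(s)) \ge \cost(\vec x(t^*)) > (1+\eps')^2 \opt$ for every $s \in [0, t^*]$, so Lemma \ref{lem:improvement} applied with parameter $\eps'$ delivers $(d/ds)\Phi(s) \le -\eps'/2 = -\eps/6$ across the whole interval. Integrating and using $t^* \ge 6\Phi(0)/\eps$ yields $\Phi(t^*) \le \Phi(0) - (\eps/6)\, t^* \le 0$. On the other hand, $\cost(\vec x(t^*)) > (1+\eps)\opt$ makes $\ln(\cost(\vec x(t^*))/\opt) > \ln(1+\eps) > 0$, and the KL term is nonnegative, forcing $\Phi(t^*) > 0$. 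This contradiction establishes the claim in the principal regime $\eps \in (0, 3/2)$; the remaining range $\eps \ge 3/2$ can be reduced to the $\eps = 3/2$ case, since once the bound $\cost(\vec x(t)) \le (5/2)\opt$ is in hand, cost monotonicity propagates $\cost \le (5/2)\opt \le (1+\eps)\opt$ forward in time.

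I do not foresee any serious obstacle: the substantive analytic work has already been done in Sections \ref{sec:feasibility}--\ref{sec:cost}, and what remains is essentially bookkeeping. The only mildly subtle point is the conversion from the $(1+\eps)^2$ threshold of Lemma \ref{lem:improvement} to the $(1+\eps)$ threshold in the conclusion, handled cleanly by the $\eps' = \eps/3$ rescaling and absorbing the factor $1/2$ from the decrease rate into the overall constant $6 = 2 \cdot 3$.
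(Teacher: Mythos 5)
Your argument is, in structure and in every substantive step, the same as the paper's proof: part (a) is Lemma \ref{lem:convergence-to-feasibility} applied to a feasible start; part (b) is a contradiction argument that rescales via $\eps' = \eps/3$, applies Lemma \ref{lem:improvement} with parameter $\eps'$ to get $(d/dt)\Phi \le -\eps/6$, uses Corollary \ref{cor:monotone-cost} to guarantee the cost stays above threshold on $[0,t_0]$, and compares the integrated drop of $\Phi$ against the nonnegativity of $\Phi$ for feasible points. This is correct and matches the paper.

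Your extra remark about the range of $\eps$ is a genuine observation: Lemma \ref{lem:improvement} is stated for $\eps' \in (0,1/2)$, so after the $\eps' = \eps/3$ substitution the argument only applies for $\eps \in (0,3/2)$, a restriction that the paper's statement (``for any $\eps>0$'') silently elides. However, the fix you sketch for $\eps \ge 3/2$ does not work as written. Applying the theorem with $\eps = 3/2$ gives $\cost(\vec x(t)) \le (5/2)\opt$ for $t \ge 4\Phi(0)$, but the claim for a larger $\eps$ requires the bound starting from the \emph{smaller} time threshold $6\Phi(0)/\eps < 4\Phi(0)$. Forward-in-time propagation of cost monotonicity cannot cover the uncontrolled window $[6\Phi(0)/\eps,\,4\Phi(0))$; that would require a statement valid at \emph{earlier} times, not later ones. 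So either the range $\eps \in (0,3/2)$ should be made explicit in the theorem, or the large-$\eps$ case needs a different argument (for instance, strengthening Lemma \ref{lem:improvement} by noting that the maximum of $2u - 1 - u^2 \rho$ over $u$ is $1/\rho - 1 \le -\eps/(1+\eps)$ when $\rho \ge 1+\eps$, which widens the admissible range but still does not reach all $\eps>0$ with the constant $6$ as stated).
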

\begin{proof}
By assumption, we start with a feasible initial solution $\vec x(0)$, thus by Lemma \ref{lem:convergence-to-feasibility} the solution $\vec x(t)$ stays feasible for all $t \ge 0$; this proves point (a). By Corollary \ref{cor:monotone-cost}, the cost of $\vec x(t)$ can only decrease as $t$ increases. To prove point (b), assume, by contradiction, that $\cost(\vec x(t_0))$ is larger than $(1+\eps)\opt$ for some $t_0$ that is larger than $\Phi(0)/(\eps/6)$. 
By Lemma \ref{lem:improvement}, $(d/dt){\Phi}(t) \le -\eps/6$ for all $t$ such that $$\cost(\vec x(t)) \ge (1+\eps) \opt = (1+3\eps') \opt \ge (1+\eps')^2 \opt,$$ where $\eps' \defas\eps/3$. In particular, $(d/dt) \Phi(t) \le -\eps/6$ would hold for all $t \in [0,t_0]$. 
This implies the desired contradiction, since $\Phi(t_0) = \Phi(0) + \int_0^{t_0} \frac{d}{dt} \Phi(t) \le \Phi(0) - (\eps/6) t_0$ would have to be negative. This is impossible since $\vec x(t)$ is feasible at all times and thus $\Phi(t)$ is nonnegative for all $t$.  
\end{proof}

\begin{proof}[Proof of Theorem \ref{thm:main-result}]
Theorem \ref{thm:main} already proves the first part of Theorem \ref{thm:main-result}. For the second part, observe that if $$\mu \defas \max_{j \in E} \frac{x_j^*}{x_j(0)}, $$ then 
\begin{align*}
\kld(\vec \xi^*, \vec \xi(t)) &= 
\sum_{j \in E} \frac{c_j x_j^*}{\opt} \ln \frac{c_j x_j^* / \opt}{c_j x_j / \sol} \\
&= \sum_j \frac{c_j x_j^*}{\opt} \ln \frac{x_j^*}{x_j} + \sum_j \frac{c_j x_j^*}{\opt} \ln \frac{\sol}{\opt} \\
& \le (\ln \mu) \cdot \sum_j \frac{c_j x_j^*}{\opt} + \left( \ln \frac{\sol}{\opt} \right) \cdot \sum_j \frac{c_j x_j^*}{\opt} \\
& = \ln  \mu + \ln \frac{\sol}{\opt}.  
\end{align*}
Substitution in the bound of Theorem \ref{thm:main} yields the claim. 
\end{proof}

\section{Physarum dynamics for the unit simplex}
\label{sec:MD}
In this final section we connect the Physarum dynamics to a classical convex optimization method and provide an intuitive justification for the relative entropy term in our potential function \eqref{eq:phi}. We prove that when the feasible LP region is the unit simplex, the trajectories of \eqref{eq:ode-cont} coincide (even \emph{outside} the simplex) with those of the continuous Mirror Descent method in an information metric. 

Nemirovksi and Yudin \cite{Nemirovski:1983} defined Mirror Descent as a system of ordinary differential equations aiming to minimize a convex function $F: \Omega \to \Real$ (for some $\Omega \subseteq \Real^n$). The Mirror Descent system has the form
\begin{align}
\label{eq:md}
\vec x &= \nabla \psi^\star (\vec y) \\
\notag \dot{\vec y} &= -\nabla F(\vec x), 
\end{align}
where $\psi$ is a strictly convex function on $\Omega$ and $\psi^\star$ is its Legendre dual. The vector $\vec y \in \Real^n$ is connected to the primal vector $\vec x$ by the invertible relation $\vec x = \nabla \psi^\star (\vec y)$. The dynamics is thus defined on the dual variables $\vec y$ and ``reflected'' in the primal variables $\vec x$. 

In the Physarum setting, $\Omega = \Real^E_{>0}$, and we take $\psi$ to be the negative entropy function $\psi(\vec x) \defas \sum_{j\in E} x_j \ln x_j$. This allows us to prove the following. 

\begin{theorem}
\label{thm:md}
Let $\vec A=\vec 1\tp$, $\vec b= 1$ in \eqref{eq:LP}, and let $F(\vec x) \defas \vec 1\tp \vec x + \ln \energy(\vec x)$. Then, the dynamics \eqref{eq:adap-cont} coincide with the Mirror Descent dynamics \eqref{eq:md}. 
\end{theorem}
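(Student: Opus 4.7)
The plan is to compute both sides explicitly and verify pointwise equality of the vector fields, with no reliance on feasibility (so the equivalence holds throughout $\Omega = \Real^E_{>0}$, not only on the simplex).

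First I would specialize the definitions in Section \ref{sec:dynamics-def} to $\vec A = \vec 1\tp$, $\vec b = 1$. In this one-row case, $\vec L = \vec 1\tp \vec C \vec 1 = \sum_{k\in E} x_k/c_k$ is a positive scalar, the potential $\vec p = \vec L^{-1}$ is a positive scalar, and $\vec q = \vec C \vec 1 \, \vec p$ has entries $q_j = (x_j/c_j)\vec p$. Using Proposition \ref{prop:pLp}, the energy is $\energy = \vec p\tp \vec L \vec p = \vec p$, so
\begin{equation*}
\energy(\vec x) \;=\; \frac{1}{\sum_{k\in E} x_k/c_k}, \qquad q_j \;=\; \frac{x_j}{c_j}\,\energy(\vec x).
\end{equation*}
The Physarum dynamics \eqref{eq:adap-cont} thus reads $\dot x_j = (x_j/c_j)\energy - x_j$.

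Next I would unwind the Mirror Descent system \eqref{eq:md} with $\psi(\vec x) = \sum_j x_j \ln x_j$. Since $\nabla \psi(\vec x)_j = 1 + \ln x_j$, the Legendre conjugacy gives $\nabla \psi^\star(\vec y)_j = e^{y_j - 1}$, and the primal/dual link $\vec x = \nabla \psi^\star(\vec y)$ is equivalent to $y_j = 1 + \ln x_j$ for each $j \in E$. Differentiating this relation along the trajectory yields the convenient identity
\begin{equation*}
\dot y_j \;=\; \frac{\dot x_j}{x_j}.
\end{equation*}

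It remains to compute $\nabla F$ for $F(\vec x) = \vec 1\tp \vec x + \ln \energy(\vec x)$. The first summand contributes $\vec 1$. For the second, I differentiate the explicit expression for $\energy$ derived above to get $\partial_{x_j} \energy = -(1/c_j)\,\energy^2$, so $\partial_{x_j} \ln \energy = -\energy/c_j$, and therefore $\partial_{x_j} F = 1 - \energy/c_j$. Combining with $\dot{\vec y} = -\nabla F(\vec x)$ and the identity $\dot y_j = \dot x_j/x_j$, the Mirror Descent dynamics become
\begin{equation*}
\dot x_j \;=\; x_j\left(\frac{\energy}{c_j} - 1\right) \;=\; \frac{x_j}{c_j}\energy - x_j \;=\; q_j - x_j,
\end{equation*}
which is precisely \eqref{eq:adap-cont}. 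Since both dynamical systems admit unique solutions for any starting point in $\Omega$, agreement of the vector fields implies agreement of trajectories, whether or not $\vec 1\tp \vec x(0) = 1$. The only delicate step is the computation of $\partial_{x_j}\ln\energy$; in the simplex case it collapses cleanly to $-\energy/c_j$, which is exactly what is needed to cancel the $1/x_j$ that $\psi$ introduces through the Legendre transform, and this cancellation is precisely what makes the two dynamics coincide.
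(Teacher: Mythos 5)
Your proposal is correct and follows essentially the same approach as the paper: specialize the Physarum quantities $(\vec L, \vec p, \vec q, \energy)$ to the one-row constraint, compute the Legendre conjugacy for the negative entropy mirror map to obtain $y_j = 1 + \ln x_j$, and verify that $\nabla F(\vec x)_j = 1 - \energy/c_j$. The only cosmetic difference is that you pull the dual-space Mirror Descent flow back to primal coordinates via $\dot y_j = \dot x_j/x_j$, whereas the paper pushes the Physarum flow into dual coordinates; the computations are the same in either direction.
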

\begin{proof}
A proof of the convergence of \eqref{eq:md} to optimality is based on a potential function $V(\vec x(t))$ that equals the Bregman divergence, under the function $\psi$, from the optimal point $\vec x^*$ to the current primal point: 
\begin{equation}
\label{eq:bregman-lyapunov}
V(\vec x(t)) \defas D_{\psi}(\vec x^*, \vec x(t)). 
\end{equation}
%
The Bregman divergence under $\psi$ is just the (unnormalized) relative entropy between $\vec x^*$ and $\vec x(t)$:
\begin{align}
D_\psi(\vec x^*, \vec x(t)) &= \psi(\vec x^*) - \psi(\vec x(t)) -\nabla \psi(\vec x(t))\tp \cdot (\vec x^* - \vec x(t)) \\
\notag &= \sum_{j \in E} x^*_j \ln x^*_j - \sum_{j \in E} x_j(t) \ln x_j(t) + {} \\
\notag & \mathrel{\phantom{bla bla bla}} \quad{} - \sum_{j \in E} (1+\ln x_j(t)) (x^*_j - x_j(t)) \\
\notag &= \sum_j x^*_j \ln \frac{x^*_j}{x_j(t)} + \sum_j x_j(t) - \sum_j x^*_j. 
\end{align}
The Legendre dual of $\psi$ is $$\psi^\star(\vec y) = \sum_{j \in E} \exp(y_j-1).$$ 
The primal-dual connection prescribed by \eqref{eq:md} is then $y_j = 1+\ln x_j$ for each $j \in E$. When the feasible LP region is the unit simplex (that is, $\vec A=\vec 1\tp$, $\vec b=1$ in \eqref{eq:LP}), the relevant quantities for the Physarum dynamics are easily computed as: 
\begin{align*}
\vec L &= \vec 1\tp \vec C \vec 1 = \sum_{j \in E} x_j/c_j, \\
\energy &= \vec b \vec L^{-1} \vec b = \left(\sum_{j \in E} x_j/c_j \right)^{-1}, \\
q_j &= \frac{x_j}{c_j} \cdot \energy \quad \text{ for } j \in E, 
\end{align*}
so the Physarum dynamics \eqref{eq:ode-cont} is 
\begin{equation}
\dot{x}_j =  \frac{x_j}{c_j} \cdot \energy  - x_j \quad \text{ for } j \in E. 
\end{equation}
By a change of variables according to the primal-dual correspondence $y_j=1+\ln x_j$, we obtain the dual-space version of the dynamics,
\begin{equation}
\label{eq:md-gradient}
\dot{y}_j = \frac{1}{c_j} \energy(\vec x) - 1,
\end{equation}
with $\energy(\vec x) = (\sum_j x_j/c_j)^{-1} = (\sum_j \exp(y_j-1)/c_j)^{-1}$. We observe that this is just the Mirror Descent method \eqref{eq:md} with objective function $ F(\vec x) \defas \vec 1\tp \vec x + \ln \energy(\vec x)$. In fact, we compute
\begin{align}
(\nabla F(\vec x))_j &= 1 + \frac{\partial \energy(\vec x)}{\partial x_j} \cdot \energy^{-1}(\vec x) \\
\notag &= 1 - \frac{1}{c_j} \left(\sum_{k \in E} \frac{x_k}{c_k}\right)^{-2} \cdot \left(\sum_{k \in E} \frac{x_k}{c_k} \right) \\
\notag &= 1 - \frac{1}{c_j} \energy(\vec x), 
\end{align}
and then \eqref{eq:md-gradient} is nothing but $\dot{\vec y} = -\nabla F(\vec x)$. 
\end{proof}

Note that we do not require the initial point $\vec x(0)$ to be feasible. 
\subsubsection*{Acknowledgment.} 
The author would like to thank Kurt Mehlhorn for suggesting a shorter proof of Lemma \ref{lem:thomson}. 


\begin{thebibliography}{10}

\bibitem{Amari:2016}
S.~Amari.
\newblock {\em Information Geometry and Its Applications}.
\newblock Springer, 2016.

\bibitem{Arora:2012}
S.~Arora, E.~Hazan, and S.~Kale.
\newblock The multiplicative weights update method: a meta-algorithm and
  applications.
\newblock {\em Theory of Computing}, 8(1):121--164, 2012.


\bibitem{Bayer:1989}
D.~A. Bayer and J.~C. Lagarias.
\newblock The nonlinear geometry of linear programming, {I}. {A}ffine and
  projective scaling trajectories.
\newblock {\em Transactions of the American Mathematical Society},
  314:499--526, 1989.

\bibitem{Becchetti:2013}
L.~Becchetti, V.~Bonifaci, M.~Dirnberger, A.~Karrenbauer, and K.~Mehlhorn.
\newblock Physarum can compute shortest paths: Convergence proofs and
  complexity bounds.
\newblock In 
  {\em Proc. 40th Int. Colloquium on Automata, Languages and
  Programming}, volume 7966 of {\em Lecture Notes in Computer Science}, pages
  472--483. Springer, 2013.

\bibitem{Beck:2003}
A.~Beck and M.~Teboulle.
\newblock Mirror descent and nonlinear projected subgradient methods for convex
  optimization.
\newblock {\em Oper. Res. Lett.}, 31(3):167--175, 2003.

\bibitem{Bollobas:1998}
B.~Bollob{\'a}s.
\newblock {\em Modern Graph Theory}.
\newblock Springer, New York, 1998.

\bibitem{Bonifaci:2013}
V.~Bonifaci.
\newblock Physarum can compute shortest paths: A short proof.
\newblock {\em Inf. Process. Lett.}, 113(1-2):4--7, 2013.

\bibitem{Bonifaci:2012}
V.~Bonifaci, K.~Mehlhorn, and G.~Varma.
\newblock Physarum can compute shortest paths.
\newblock In {\em Proc. 23rd ACM-SIAM Symposium on
  Discrete Algorithms}, pages 233--240. SIAM, 2012.

\bibitem{Chazelle:2012}
B.~Chazelle.
\newblock Natural algorithms and influence systems.
\newblock {\em Communications of the ACM}, 55(12):101--110, 2012.

\bibitem{Ito:2011}
K.~Ito, A.~Johansson, T.~Nakagaki, and A.~Tero.
\newblock Convergence properties for the {Physarum} solver.
\newblock arXiv:1101.5249v1, Jan 2011.

\bibitem{Johannson:2012}
A.~Johannson and J.~Y. Zou.
\newblock A slime mold solver for linear programming problems.
\newblock In {\em How the World Computes - Turing Centenary Conference and 8th
  Conference on Computability in Europe}, pages 344--354. Springer, 2012.

\bibitem{Karmarkar:1990}
N.~K. Karmarkar.
\newblock Riemannian geometry underlying interior--point methods for linear
  programming.
\newblock In J.~C. Lagarias and M.~J. Todd, editors, {\em Mathematical
  Developments Arising from Linear Programming}, volume 114 of {\em
  Contemporary Mathematics}, pages 51--75. American Mathematical Society, 1990.

\bibitem{Nakagaki:2000}
T.~Nakagaki, H.~Yamada, and {\'A}.~T{\'o}th.
\newblock Maze-solving by an amoeboid organism.
\newblock {\em Nature}, 407:470, 2000.

\bibitem{Navlakha:2011}
S.~Navlakha and Z.~Bar-Joseph.
\newblock Algorithms in nature: the convergence of systems biology and
  computational thinking.
\newblock {\em Molecular Systems Biology}, 7:546, 2011.

\bibitem{Nemirovski:1983}
A.~S. Nemirovski and D.~B. Yudin.
\newblock {\em Problem Complexity and Method Efficiency in Optimization}.
\newblock John Wiley, 1983.

\bibitem{Raskutti:2015}
G.~Raskutti and S.~Mukherjee.
\newblock The information geometry of mirror descent.
\newblock {\em {IEEE} Trans. Information Theory}, 61(3):1451--1457, 2015.

\bibitem{Straszak:2016:soda}
D.~Straszak and N.~K. Vishnoi.
\newblock Natural algorithms for flow problems.
\newblock In {\em Proc. 27th {ACM-SIAM} Symposium
  on Discrete Algorithms}, pages 1868--1883. SIAM, 2016.

\bibitem{Straszak:2016:itcs}
D.~Straszak and N.~K. Vishnoi.
\newblock On a natural dynamics for linear programming.
\newblock In {\em Proc. {ACM} Conference on Innovations in
  Theoretical Computer Science}, page 291. ACM, 2016.

\bibitem{Tero:2006}
A.~Tero, R.~Kobayashi, and T.~Nakagaki.
\newblock Physarum solver: {A} biologically inspired method of road-network
  navigation.
\newblock {\em Physica A}, 363:115--119, 2006.

\bibitem{Tero:2007}
A.~Tero, R.~Kobayashi, and T.~Nakagaki.
\newblock A mathematical model for adaptive transport network in path finding
  by true slime mold.
\newblock {\em Journal of Theoretical Biology}, 244:553--564, 2007.

\bibitem{Tero:2010}
A.~Tero, S.~Takagi, T.~Saigusa, K.~Ito, D.~P. Bebber, M.~D. Fricker, K.~Yumiki,
  R.~Kobayashi, and T.~Nakagaki.
\newblock Rules for biologically inspired adaptive network design.
\newblock {\em Science}, 327:439--442, 2010.

\end{thebibliography}

\end{document}